\newtheorem{theorem}{Theorem}[section]
\newtheorem{proposition}[theorem]{Proposition}
\newtheorem{corollary}[theorem]{Corollary}
\newtheorem{algorithm}[theorem]{Algorithm}
\theoremstyle{definition}
\newtheorem{remark}[theorem]{Remark}
\newtheorem{definition}[theorem]{Definition}
\newtheorem{example}[theorem]{Example}
\def\OO{{\mathcal{O}}}
\def\QQ{\mathbb{Q}}
\def\ZZ{\mathbb{Z}}
\def\X{{\mathbb{X}}}
\def\F{{\mathcal{F}}}
\def\AA{\mathbb{A}}
\def\M{\mathfrak{M}}
\def\Q{{\mathfrak{Q}}}
\def\m{\mathfrak{m}}
\let\epsilon=\varepsilon
\def\phi{{\varphi}}
\let\Psi=\varPsi
\let\Phi=\varPhi
\let\theta=\vartheta
\DeclareSymbolFont{CMMfont}{OML}{cmm}{m}{it}
\DeclareMathSymbol{\Varrho}{3}{CMMfont}{37}% Symbol 37
\def\rho{{\mathop{\Varrho}\,}}
\def\NF{\mathop{\rm NF}\nolimits}
\def\LF{\mathop{\rm LF}\nolimits}
\def\DF{\mathop{\rm DF}\nolimits}
\def\HF{\mathop{\rm HF}\nolimits}
\def\HFa{\mathop{\rm HF}\nolimits^a}
\def\ri{\mathop{\rm ri}\nolimits}
\def\Mat{\mathop{\rm Mat}\nolimits}
\def\Spec{\mathop{\rm Spec}\nolimits}
\def\charac{\mathop{\rm char}\nolimits}
\def\Rad{\mathop{\rm Rad}\nolimits}
\def\Ker{\mathop{\rm Ker}\nolimits}
\def\gr{\mathop{\rm gr}\nolimits}
\def\grF{{\mathop{\rm gr}\nolimits_{\mathcal{F}}}}
\def\ord{\mathop{\rm ord}\nolimits}
\def\Jac{\mathop{\rm Jac}\nolimits}
\def\Fitt{\mathop{\rm Fitt}\nolimits}
\let\To=\longrightarrow
\def\hom{^{\rm hom}}
\def\tr{^{\,\rm tr}}
\def\tfrac #1#2{{\textstyle\frac{#1}{#2}}}
\def\cocoa{\mbox{\rm
  C\kern-.13em o\kern-.07 em C\kern-.13em o\kern-.15em A}}
\def\apcocoa{\mbox{\rm
A\kern-0.13em p\kern -0.07em C\kern-.13em o\kern-.07 em C\kern-.13em
o\kern-.15em A}}
\title[Algorithms for Checking Complete Intersections]{Algorithms 
for Checking Zero-Dimensional Complete Intersections}
\author[M. Kreuzer]{Martin Kreuzer}
\address{Fakult\"at f\"ur Informatik und Mathematik, Universit\"at
Passau, D-94030 Passau, Germany}
\email{Martin.Kreuzer@uni-passau.de}
\author[L.N. Long]{Le Ngoc Long}
\address{Fakult\"at f\"ur Informatik und Mathematik, Universit\"at Passau,
D-94030 Passau, Germany and Department of Mathematics, 
Hue University of Education, 34 Le Loi, Hue, Vietnam}
\email{nglong16633@gmail.com}
\author[L. Robbiano]{Lorenzo Robbiano}
\address{Dipartimento di Matematica, Universit\`a di Genova,
Via Dodecaneso 35, I-16146 Genova, Italy}
\email{lorobbiano@gmail.com}
\date{\today}
\keywords{zero-dimensional affine algebra, zero-dimensional scheme,
complete intersection, locally complete intersection, 
strict complete intersection, Fitting ideal, border basis}
\subjclass{Primary 13C40, Secondary  14M10, 13H10, 13P99, 14Q99}
\begin{document}

\begin{abstract}
Given a 0-dimensional affine $K$-algebra $R=K[x_1,\dots,x_n]/I$,
where $I$ is an ideal in a polynomial ring $K[x_1,\dots,x_n]$
over a field~$K$, or, equivalently, given a 
0-dimensional affine scheme, we construct effective algorithms
for checking whether $R$ is a complete intersection at a maximal
ideal, whether $R$ is  locally a complete intersection, and whether
$R$ is a strict complete intersection. These algorithms are based
on Wiebe's characterization of \hbox{0-di}\-men\-sional local complete
intersections via the 0-th Fitting ideal of the maximal ideal.
They allow us to detect which generators of~$I$ form a regular sequence
resp.\ a strict regular sequence, and they work over
an arbitrary base field~$K$. Using degree filtered border bases,
we can detect strict complete intersections in certain families
of 0-dimensional ideals.
\end{abstract}

\maketitle

%%%%%%%%%%%%%%%%%%%%%%%%%%%%%%%%%%%%%%
%
%  Section 1: Introduction
%
%%%%%%%%%%%%%%%%%%%%%%%%%%%%%%%%%%%%%%

\section{Introduction}

Regular sequences and complete intersections play a fundamental role in 
Commutative Algebra and Algebraic Geometry. 
Given an ideal~$I$ in a polynomial ring $P=K[x_1,\dots,x_n]$
over a field~$K$, e.g., the vanishing ideal of an affine or 
projective scheme, it is therefore an important algorithmic task
to check whether~$I$ is a complete intersection ideal, that is,
whether~$I$ can be generated by ${\rm ht}(I)$ polynomials.
Equivalently, we call $R=P/I$ a complete intersection ring in this case.
Several approaches have been developed to tackle this problem
effectively for special classes of ideals.

For instance, if the ring $R=P/I$ is local, based on a description
of the structure of the $R$-algebra ${\rm Tor}^R(K,K)$ 
in~\cite{T57}, characterizations of the complete intersection
property using the Hilbert series of ${\rm Tor}^R(K,K)$
were developed in~\cite{As59} and~\cite{GL}. However, this approach
requires the calculation of the Hilbert series of a non-commutative
algebra. Furthermore, in the local setting, 
complete intersections were characterized in~\cite{V67} using 
the freeness of the conormal module $I/I^2$
and the finiteness of the projective dimension of~$I$. Again, these
conditions are not easy to check algorithmically.
In a similar vein, if the base field~$K$ has characteristic
zero, one can use techniques based on the K\"ahler differential
module $\Omega^1_{R/K}$, as in~\cite{AH94}, or on K\"ahler
differents, as in~\cite{KL},
but they are neither general enough for our purposes nor do they
lend themselves to a generalization for families of ideals.
Finally, let us mention that effective algorithms have been
developed in~\cite{BGS} for checking the complete intersection property of
toric ideals defining affine semigroup rings.

In this paper we are interested in algorithms for 
checking the locally complete intersection property and the strict
complete intersection property for a \hbox{0-dimen}\-sion\-al ideal $I$
in $P=K[x_1,\dots,x_n]$, where $K$ is an arbitrary field.
In other words, we want to check these properties for a
0-dimensional affine \hbox{$K$-al}\-ge\-bra of the form $P/I$.
In the language of Algebraic Geometry, the scheme
$\X=\Spec(P/I)$ is then a 0-dimensional affine scheme,
and $R = P/I$ is its affine coordinate ring.
Notice that every 0-dimensional projective scheme can be
embedded into a basic affine open subset of~$\mathbb{P}^n$
after possibly extending the field~$K$ slightly.
Then we can readily move between the languages of affine and
projective geometry by using homogenization and dehomogenization.
For reasons which will become clear shortly, we prefer the affine
setting in this paper. In any case, 0-dimensional affine $K$-algebras
$R=P/I$ have been used in many areas besides Algebraic Geometry,
for instance in algebraic statistics, algebraic biology, etc.

Our main algorithms check whether a 0-dimensional affine
\hbox{$K$-al}\-ge\-bra $R=P/I$ is locally a complete intersection or
a strict complete intersection. They are based on a characterization
of local complete intersections by H.~Wiebe in~\cite{Wie}, where it is
shown that a 0-dimensional local ring is a complete intersection
if and only if the 0-th Fitting ideal of its maximal ideal is non-zero.
Given a \hbox{0-dimen}\-sional affine $K$-algebra $R=P/I$
whose defining ideal~$I$ has a primary decomposition
$I=\Q_1 \cap \cdots \cap \Q_s$, this characterization can be applied
right off the bat, because the localizations are of the form 
$P/\Q_i$, and hence again 0-dimensional local affine $K$-algebras.
For checking the strict complete intersection property, we use
the graded ring $\grF(R)$ with respect to the degree filtration~$\F$
instead. It has a presentation $\grF(R) = P/\DF(I)$, where
the degree form ideal $\DF(I)$ is homogeneous and 0-dimensional
(see~\cite{KR2}, Def.~4.2.13).

Now let us describe the contents of this paper in detail.
In Section~\ref{Zero-Dimensional Affine Schemes} 
we start by recalling some basic properties
of a $0$-dimensional scheme~$\X$ embedded in an affine 
space $\mathbb{A}^n_K$, where~$K$ is an arbitrary field.
In particular, besides recalling the degree filtration
and the affine Hilbert function of the affine coordinate
ring $R_\X = P/I_\X$ of~$\X$, where $I_\X$ is the
vanishing ideal of~$\X$ in $P=K[x_1,\dots,x_n]$ and~$K$
is a field, we recall the degree form ideal $\DF(I_\X)$,
the associated graded ring $\grF(R_\X)$, and mention their
connection to the Rees algebra $\mathcal{R}_\F(R_\X)$
and to Macaulay bases of~$I_\X$.

Then the main part of the paper starts in Section~\ref{Local 
Complete Intersections}. After defining what we mean by
a complete intersection at a maximal ideal and by the property of being
locallly a complete
intersection, we recall Wiebe's characterization mentioned
above (see~\cite{Wie}, Satz~3). The central tool for computing
the 0-th Fitting ideal needed in this characterization is
given in Proposition~\ref{prop:gensyz}. For a maximal ideal~$\M$
of~$P$ and an $\M$-primary ideal~$\Q$, it suffices to write 
the generators of~$\Q$ in terms of a regular sequence
generating~$\M$ and to compute the maximal minors of the
coefficient matrix. As an immediate consequence, we obtain 
Algorithm~\ref{alg:CheckLCI} for checking if a 0-dimensional
affine $K$-algebra $R_\X=P/I_\X$ is a complete intersection at a maximal ideal,
and by combining this with the computation of a primary
decomposition of~$I_\X$, we get an algorithm for checking
whether $R_\X$ is locally a complete intersection.
Moreover, the non-zero minors provide us with regular sequences
generating $\Q\, P_\M$, as Proposition~\ref{prop:whichgens} shows.
Suitable examples at the end of the section
illustrate the merits and some hidden features of these algorithms.

In Section~\ref{Strict Complete Intersections via Macaulay Bases}  
further notions are recalled. More precisely, strict regular sequences, 
strict Gorenstein rings, and strict complete intersections enter the game.
In particular, the ideal $I_\X$ is called a strict complete intersection
if its degree form ideal $\DF(I_\X)$ is generated by a homogeneous
regular sequence. Since the graded ring $\grF(R_\X) = P/\DF(I_\X)$
is 0-dimensional and local, we can use Proposition~\ref{prop:gensyz}
and Wiebe's result again to construct Algorithm~\ref{alg:CheckSCI}
for checking the strict complete intersection property.
Moreover, we also get a description of which generators of~$I_\X$
form a strict regular sequence (see Corollary~\ref{cor:whichhomoggens})
and illustrate everything via some explicit examples.

In the last section of the paper, we present a second algorithm
for checking the strict complete intersection property via border bases
which generalizes to certain families of 0-dimensional ideals.
Based on the notion of a degree filtered $K$-basis of~$R_\X$,
we define degree filtered $\OO$-border bases. They have several
nice characterizations, the most useful one here being the
property that the degree forms of the border basis polynomials
form an $\OO$-border basis of the degree form ideal
(see Proposition~\ref{prop:CharDFBB}). Then we obtain
Algorithm~\ref{alg:CheckSCI2} which checks for the strict
complete intersection property using a degree filtered
border basis. This version allows us to detect all  strict complete
intersections within certain families of 0-dimensional ideals, as 
illustrated by Example~\ref{ex:SCI22} and applied further
in~\cite{KLR3}. Finally, in Remark~\ref{rem:Jac}, we compare 
the algorithms of this paper with the methods based on Jacobian matrices,
K\"ahler differentials, and K\"ahler differents mentioned above.

All examples in this paper were computed using the computer algebra
system \cocoa\ (see~\cite{CoCoA}).
Unless explicitly stated otherwise, we adhere to the notation
and definitions provided in~\cite{KR1}, \cite{KR2}, and~\cite{KR3}.

\bigbreak
%%%%%%%%%%%%%%%%%%%%%%%%%%%%%%%%%%%%%%%%%%%%%%%%
%
%  Section 2: Zero-Dimensional Affine Schemes
%
%%%%%%%%%%%%%%%%%%%%%%%%%%%%%%%%%%%%%%%%%%%%%%%%

\section{Zero-Dimensional Affine Schemes}
\label{Zero-Dimensional Affine Schemes}

In the following we always work over an arbitrary field~$K$
and let $\AA^n_K$ be the affine $n$-space over~$K$.
We fix a coordinate system, so that
the affine coordinate ring of~$\AA^n_K$ is given by the
polynomial ring $P=K[x_1,\dots,x_n]$. Thus a
0-dimensional subscheme~$\X$ of~$\AA^n_K$ is defined by a
0-dimensional ideal~$I_\X$ in $P\!$, and its affine coordinate
ring is $R_\X=P/I_\X$. Consequently, the vector space dimension
$\mu=\dim_K(R_\X)$ is finite and equal to the length of the
scheme~$\X$.

Since we are keeping the coordinate system fixed
at all times, we have further invariants of~$\X$.
Recall that the {\bf degree filtration} $\widetilde{\mathcal{F}}
= (F_iP)_{i\in\ZZ}$ on~$P$ is given by $F_iP =
\{f\in P\setminus \{0\} \mid \deg(f)\le i\} \cup \{0\}$
for all $i\in\ZZ$. Then the induced filtration
$\F = (F_iR_\X)_{i\in\ZZ}$, where $F_i R_\X =
F_iP /(F_iP\cap I_\X)$, is called the {\bf degree filtration}
on~$R_\X$. It is easy to see that the degree filtration
on~$R_\X$ is increasing, exhaustive and {\bf orderly} in the
sense that every element $f\in R_\X \setminus \{0\}$ has an
{\bf order} $\ord_\F(f)=\min\{i\in\ZZ \mid f\in F_i R_\X
\setminus F_{i-1}R_\X\}$.

\begin{definition}\label{def:Hilb}
Let~$\X$ be a 0-dimensional subscheme of~$\AA^n_K$
as above.
\begin{enumerate}
\item[(a)] The map $\HFa_\X: \ZZ \To \ZZ$ given by $i\mapsto
\dim_K(F_i R_\X)$ is called the {\bf affine Hilbert function} of~$\X$.

\item[(b)] The number $\ri(R_\X)= \min \{i\in\ZZ \mid
\HFa_\X(j)=\mu$ for all $j\ge i\}$ is called the {\bf regularity
index} of~$\X$.

\item[(c)] The first difference function $\Delta\HFa_\X (i) =
\HFa_\X(i) - \HFa_\X(i-1)$ of~$\HFa_\X$ is called the
{\bf Castelnuovo function} of~$\X$, and the number $\Delta_\X =
\Delta \HFa_\X(\ri(R_\X))$ is the {\bf last difference} of~$\X$.
\end{enumerate}
\end{definition}

It is well-known that $\HFa_\X$ satisfies $\HFa_\X(i)=0$ for
$i<0$ and
$$
1 = \HFa_\X(0) < \HFa_\X(1) < \cdots < \HFa_\X(\ri(R_\X)) = \mu
$$
as well as $\HFa_\X(i)=\mu$ for $i\ge \ri(R_\X)$. The affine Hilbert
function of~$\X$ is related to the following objects.

\begin{definition}\label{def:degreeform}
Let $\X$ be a 0-dimensional subscheme of~$\AA^n_K$ as above.
\begin{enumerate}
\item[(a)] For every polynomial $f\in P\setminus \{0\}$, its
homogeneous component of highest degree is called the
{\bf degree form} of~$f$ and is denoted by $\DF(f)$.

\item[(b)] The ideal
$\DF(I_\X)= \langle \DF(f) \mid f\in I_\X \setminus \{0\}\rangle$
is called the {\bf degree form ideal} of~$I_\X$.

\item[(c)] The ring $\gr_{\widetilde{\F}}(P) =
\bigoplus_{i\in\ZZ} F_i P / F_{i-1}P$
is called the {\bf associated graded ring} of~$P$ with respect
to~$\widetilde{\F}$.

\item[(d)] The ring $\gr_\F(R_\X)= 
\bigoplus_{i\in\ZZ} F_i R_\X / F_{i-1} R_\X$ is called the 
{\bf associated graded ring} of~$R_\X$ with respect to~$\F$.

\item[(e)] For an element $f\in R_\X \setminus \{0\}$ of order
$d=\ord_\F(f)$, the residue class
$\LF(f) = f+ F_{d-1} R_\X$ in $\grF(R_\X)$ is called the
{\bf leading form} of~$f$ with respect to~$\F$.
\end{enumerate}
\end{definition}

We observe that in our setting the associated graded ring
$\grF(R_\X)$ is a \hbox{0-dimensional} local ring whose 
maximal ideal is generated by the residue classes of the indeterminates.
Its $K$-vector space dimension is given by
$$
\dim_K(\grF(R_\X) \;=\; {\textstyle\sum\limits_{i=0}^\infty} 
\dim_K(F_i R_\X / F_{i-1}R_\X) \;=\; \dim_K(R_\X)
$$
For actual computations involving the associated graded ring
and leading forms, we can use the following observations.

\begin{remark}\label{rem:ReprGr}
Notice that there is a canonical isomorphism of graded
\hbox{$K$-al}\-ge\-bras $\gr_{\widetilde{\F}}(P)\cong P$
which allows us to identify $\grF(R_\X) \cong P/\DF(I_\X)$
(see~\cite{KR2}, Props.~6.5.8 and 6.5.9).

In order to represent the leading form of a non-zero element
$f\in R_\X$ as a residue class in $P/\DF(I_\X)$, we first have
to represent~$f$ by a polynomial $F\in P$ with $\deg(F)=
\ord_\F(f)$. This can, for instance, be achieved by
taking any representative~$\widetilde{F}\in P$ of~$f$
and computing the normal form $F=\NF_{\sigma,I_\X}(\widetilde{F})$
with respect to a degree compatible term ordering~$\sigma$.
Then the degree form $\DF(F)$ represents the leading form
$\LF_\F(f)$ with respect to the isomorphism $\grF(R_\X) \cong
P/\DF(I_\X)$.
\end{remark}

The affine Hilbert function of~$\X$ and the (usual) Hilbert function
of~$\grF(R_\X)$ are connected by
$$
\HFa_\X(i) \;=\; {\textstyle\sum\limits_{j=0}^i} \HF_{\grF(R_\X)}(j)
\hbox{\quad \rm and\quad }
\Delta \HFa_\X(i) \;=\; \HF_{\grF(R_\X)}(i)
$$
for all $i\ge 0$. Since the Hilbert function of~$\grF(R_\X)$ can be
calculated using a suitable Gr\"obner basis of~$I_\X$ (see~\cite{KR2},
Section 4.3), this formula allows us to compute the affine Hilbert
function of~$\X$.

The ring~$R_\X$ and its graded ring $\grF(R_\X)$ are connected
by the following flat family. (For further details and proofs,
see~\cite{KR2}, Section 4.3.B.)

\begin{remark}\label{ReesAlg}
Let $x_0$ be a further indeterminate.
\begin{enumerate}
\item[(a)] The ring ${\mathcal{R}_{\widetilde{\F}}(P) = \bigoplus_{i\in\ZZ}
F_i P\cdot x_0^i}_{\mathstrut}$ is called the {\bf Rees algebra}
of~$P$ with respect to~${\widetilde{\F}}$. Every non-zero homogeneous
element of the Rees algebra is of the form $f\cdot x_0^{d+j}$
with $d=\deg(f)$ and $j\ge 0$. By identifying it with the polynomial
$f\hom\cdot x_0^j$, we obtain an isomorphism of graded $K[x_0]$-algebras
$\mathcal{R}_{\widetilde{\F}}(P) \cong K[x_0,x_1,\dots,x_n] =: \overline{P}$.
Here $f\hom $ is the usual homogenization $f\hom = x_0^{\deg(f)}
\cdot f(\tfrac{x_1}{x_0},\dots,\tfrac{x_n}{x_0})$.

\item[(b)] Similarly, using the degree filtration~$\F$ on~$R_\X$, we have the
{\bf Rees algebra} $\mathcal{R}_\F(R_\X) = \bigoplus_{i\in\ZZ}
F_i R_\X \cdot x_0^i$. Now we let $I_\X\hom = \langle f\hom \mid f\in I_\X
\setminus \{0\}\rangle$ be the {\bf homogenization} of~$I_\X$. Then the above
isomorphism induces an isomorphism of graded $K[x_0]$-algebras
$\mathcal{R}_\F(R_\X) \cong \overline{P} / I_\X\hom =: R_\X\hom$.

Geometrically speaking, the ring $R_\X\hom$ is the homogeneous coordinate
ring of the 0-dimensional scheme obtained by embedding
$\X=\Spec(P/I_\X) \subset \AA^n_K$
into the projective $n$-space via $\AA^n_K \cong D_+(x_0)
\subset \mathbb{P}^n$.

\item[(c)] The ring $R_\X\hom$ is a free (and hence flat) $K[x_0]$-module.
The elements $x_0$ and $x_0-1$ are non-zerodivisors for $R_\X\hom$,
and we have canonical isomorphisms
$$
R_\X\hom / \langle x_0-1\rangle \cong R_\X \hbox{\qquad \rm and\qquad}
R_\X\hom / \langle x_0\rangle \cong \grF(R_\X)
$$
In other words, the algebra homomorphism $K[x_0] \To R_\X\hom$
defines a flat family whose special fiber is $\grF(R_\X)$
and whose general fiber is~$R_\X$.

\item[(d)] A set $\{f_1,\dots,f_s\}\subseteq I_\X$ with
$\DF(I_\X)=\langle \DF(f_1),\dots,\DF(f_s)\rangle$ is called a
{\bf Macaulay basis} of~$I_\X$. In this case the polynomials
generate~$I_\X$ and we have $I_\X\hom = \langle f_1\hom,\dots,
f_s\hom\rangle$ (see~\cite{KR2}, Thm.~4.3.19).
\end{enumerate}
\end{remark}

\bigbreak
%%%%%%%%%%%%%%%%%%%%%%%%%%%%%%%%%%%%%%%%%%%%%%%%%%%%%%%%
%
% Section 3: Local Complete Intersections
%
%%%%%%%%%%%%%%%%%%%%%%%%%%%%%%%%%%%%%%%%%%%%%%%%%%%%%%%%

\section{Locally Complete Intersections}
\label{Local Complete Intersections}

In  this section we analyse a property
of a 0-dimensional subscheme~$\X$ of~$\AA^n_K$, namely the
property of being locally a complete intersection. 
For this purpose, we continue to
use the definitions and notation introduced in the preceding section.
Recall that a 0-dimensional local ring of the form $P_\M/I$ with
a field~$K$, a maximal ideal~$\M$, and a 0-dimensional ideal~$I$,
is called a {\bf complete intersection} if~$I$ can be generated
by a regular sequence of length~$n$ in $P_\M$.
In our setting, the following versions of this notion
will be considered.

\begin{definition}\label{def:locCI}
Let $\X$ be a 0-dimensional subscheme of~$\AA^n_K$ as above, let
$I_\X=\Q_1\cap \cdots\cap \Q_s$ be the primary decomposition of~$I_\X$,
and let $\m_1,\dots,\m_s$ be the maximal ideals of~$R_\X$.
\begin{enumerate}
\item[(a)] The scheme~$\X$ is called a {\bf complete intersection
at~$\m_i$} if the local ring  $(R_\X)_{\m_i} \cong P/\Q_i$
is a complete intersection.

\item[(b)] We say that~$\X$ is a {\bf locally complete
intersection scheme} if it is a complete intersection
at~$\m_i$ for each $i\in\{1,\dots,s\}$.
\end{enumerate}
\end{definition}

In the following we are looking for characterizations of 0-dimensional
locally complete intersection schemes which can be checked
algorithmically.
Our approach is inspired by the following result of H.~Wiebe
(cf.~\cite{Wie}, Satz~3). Here the $i$-th {\bf Fitting ideal} of a
module~$M$ is denoted by $\Fitt_i(M)$. (For a definition and 
basic properties of Fitting ideals, see~\cite{Ku2}, Appendix~D.)

\begin{proposition}\label{prop:Wiebe}
Let $R$ be a local ring with maximal ideal~$\m$.
Then the following conditions are equivalent.
\begin{enumerate}
\item[(a)] The ring $R$ is a 0-dimensional complete intersection.

\item[(b)] We have $\Fitt_0(\m)\ne \langle 0\rangle$.
\end{enumerate}
\end{proposition}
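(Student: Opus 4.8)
The plan is to prove both implications directly, working with a minimal free presentation of $\m$. Write $\m = \langle a_1, \dots, a_r \rangle$ with $r$ minimal, so that $r = \dim_K(\m/\m^2)$ equals the embedding dimension of $R$. Choose a finite free presentation $R^p \xrightarrow{A} R^r \To \m \To 0$, so that $\Fitt_0(\m) = \langle \text{maximal minors of } A \text{ of size } r\rangle$ if $p \ge r$, and $\Fitt_0(\m) = \langle 0 \rangle$ if $p < r$. Since $R$ is Artinian local, $\m$ is nilpotent and has positive length unless $R$ is a field (in which case both statements are trivially true, with the empty complete intersection), so we may assume $\m \ne \langle 0\rangle$.

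For the implication (a) $\Rightarrow$ (b): if $R$ is a $0$-dimensional complete intersection, then by definition $R \cong P_\M/\langle f_1, \dots, f_n\rangle$ for a regular sequence $f_1, \dots, f_n$; equivalently, after passing to a regular system of parameters, the embedding dimension of $R$ equals the number of defining equations. I would use the standard structural fact that for such an $R$ the conormal module $\m/\m^2$ is free over $R/\m = K$ of rank equal to the embedding dimension, and that the Koszul complex on a minimal generating set of $\m$ provides the start of a minimal free resolution. Concretely, one shows that the relation module of a minimal generating system $a_1, \dots, a_r$ of $\m$ is generated by the Koszul relations $a_i e_j - a_j e_i$, so the presentation matrix $A$ can be taken to be (a submatrix built from) the Koszul syzygy matrix. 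Then one exhibits an explicit $r \times r$ minor of this matrix that is nonzero: taking the relations coming from pairs $(1,2), (2,3), \dots$ one gets, up to sign, a minor whose value is a power of (a product of) the generators, which is nonzero because $\m$ has a nonzero socle and the relevant product lands in the socle rather than being zero. This "socle" argument — pinning down precisely which $r \times r$ minor survives — is the first place requiring care.

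For the implication (b) $\Rightarrow$ (a): suppose $\Fitt_0(\m) \ne \langle 0 \rangle$. With the minimal presentation above, this forces $p \ge r$ and the existence of a nonzero $r \times r$ minor of $A$. One then argues that the columns of $A$ realizing this minor already generate the full syzygy module of $a_1, \dots, a_r$ (here nonvanishing of the minor, together with minimality of the generating set and Nakayama, is used to rule out "extra" independent syzygies), so in fact $\m$ has a free resolution beginning $R^r \xrightarrow{A'} R^r \To \m \To 0$ with $\det(A') \ne 0$. From this square presentation one deduces, via the Buchsbaum–Rim / Auslander–Buchsbaum style comparison with the Koszul complex — or more elementarily by noting that $\det(A') \cdot \id$ kills $\m$ so $\det(A') \in \Ann(\m) = \Soc(R) \setminus \{0\}$ and computing lengths — that $a_1, \dots, a_r$ must in fact be a regular sequence (equivalently, that $r = n$ after passing to $P_\M$ and the $f_i$'s can be chosen accordingly). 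The key input is Wiebe's length identity: $\dim_K(R) \le 2^r$ with equality characterizing the complete intersection case, matched against $\dim_K R/\langle \det(A')\rangle$-type estimates.

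The main obstacle I anticipate is the converse direction, specifically showing that a nonzero $0$-th Fitting ideal of $\m$ actually produces a \emph{regular} sequence (not merely a system of parameters of the right size that generates $\m$) — i.e., translating the numerical/minor condition back into the algebraic statement that $R$ is defined by a regular sequence. I expect this to hinge on the Koszul homology of $a_1, \dots, a_r$ vanishing in positive degrees, which one extracts from the squareness and non-degeneracy ($\det \ne 0$) of the presentation matrix together with the local, Artinian hypothesis; a clean way to package this is to invoke that over a Noetherian local ring a module with a finite free resolution $0 \to R^r \to R^r \to \m \to 0$ and $\m$ of finite length forces the Koszul complex on a minimal generating set to be exact, which is exactly Wiebe's theorem. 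If a self-contained argument is wanted, the length bookkeeping via the socle element $\det(A')$ is the fallback.
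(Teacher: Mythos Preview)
The paper does not prove this proposition at all: it is quoted as a known result of Wiebe (\cite{Wie}, Satz~3) and used as a black box. So there is nothing to compare your argument against within the paper itself. That said, your proposal contains a genuine error in the direction (a)$\Rightarrow$(b), and the direction (b)$\Rightarrow$(a) is too vague to stand on its own.

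For (a)$\Rightarrow$(b), your claim that the relation module of a minimal generating system $a_1,\dots,a_r$ of~$\m$ is generated by the Koszul relations $a_ie_j-a_je_i$ is false when~$R$ is a $0$-dimensional complete intersection that is not a field. Indeed, if $R=P_\M/\langle f_1,\dots,f_n\rangle$ and $\m$ is generated by the images of a regular system of parameters $g_1,\dots,g_n$ of~$P_\M$, then writing $f_j=\sum_i a_{ij}g_i$ gives additional syzygies $\sum_i \bar a_{ij}e_i$ that are \emph{not} in the Koszul span. This is precisely the content of Proposition~\ref{prop:gensyz} in the paper. Worse, the $n\times n$ minors built only from Koszul columns are all zero in~$R$: the proof of Proposition~\ref{prop:gensyz} shows that any maximal minor involving a Koszul column lies in~$\Q$. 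So your proposed ``explicit nonzero minor'' is in fact zero; the nonzero minor that witnesses $\Fitt_0(\m)\ne 0$ comes from the matrix $W=(a_{ij})$, not from Koszul syzygies.

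For (b)$\Rightarrow$(a), the assertion that the $r$ columns realizing a nonzero minor already generate the full syzygy module does not follow from Nakayama and nonvanishing of the determinant alone, and your conclusion that ``$a_1,\dots,a_r$ must be a regular sequence'' is misphrased: these elements lie in the Artinian local ring~$R$, where no nonunit is a nonzerodivisor. What one must show is that the defining ideal in a regular presentation is generated by a regular sequence, and your sketch does not make the passage from the square presentation of~$\m$ over~$R$ to that statement. Wiebe's actual argument uses a more delicate comparison; if you want a self-contained proof you should consult \cite{Wie} directly rather than try to reconstruct it, since the paper here does not.
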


Now we turn this characterization into an algorithm 
by specifying a way to compute the 0-th Fitting ideal of the
maximal ideal in the setting of this paper.
The next proposition provides the central tool for this algorithm.
Recall that a maximal ideal in the polynomial ring can always
be generated by a regular sequence of length~$n$ (see~\cite{KR3},
Cor.~5.3.14).

\begin{proposition}\label{prop:gensyz}
Let $P=K[x_1,\dots,x_n]$, let~$\M$ be a maximal ideal of~$P$,
and let $\{g_1,\dots,g_n\}$ be a regular sequence which generates~$\M$.
Then let $\Q\subset P$ be an $\M$-primary ideal, let $\{f_1,\dots,f_r\}$
be a system of generators of~$\Q$, let $R=P/\Q$, and let $\m=\M/\Q$.
For $j=1,\dots,r$, write $f_j=\sum_{i=1}^n a_{ij}g_i$, and form the
matrix $W\in\Mat_{n,r}(P)$ of size $n\times r$
whose columns are given by $\sum_{i=1}^n a_{ij}e_i$ for $j=1,\dots,r$.
Then the 0-th Fitting ideal $\Fitt_0(\m)$ is generated by
the residue classes in~$R$ of the minors of order~$n$ of the 
matrix~$W$.
\end{proposition}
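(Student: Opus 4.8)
The plan is to exhibit an explicit free presentation of the $R$-module $\m=\M/\Q$ and then read off its $0$-th Fitting ideal from the presentation matrix. First I would observe that since $\{g_1,\dots,g_n\}$ generates $\M$, the residue classes $\bar g_1,\dots,\bar g_n$ generate $\m$ as an $R$-module, so we have a surjection $R^n \to \m$ sending the $i$-th basis vector $e_i$ to $\bar g_i$. The Fitting ideal $\Fitt_0(\m)$ is computed from the ideal of maximal minors of any matrix whose columns generate the module of relations (syzygies) among these generators; concretely, if a relation module is generated by the columns of a matrix $V \in \Mat_{n,t}(R)$, then $\Fitt_0(\m)$ is the ideal generated by the $n\times n$ minors of $V$. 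The crux is therefore to identify enough syzygies among $\bar g_1,\dots,\bar g_n$.

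The key point is that the columns of $W$ are syzygies. Indeed, for each $j$ the relation $f_j = \sum_i a_{ij} g_i$ holds in $P$, and $f_j \in \Q$, so passing to $R = P/\Q$ we get $\sum_i \bar a_{ij} \bar g_i = 0$; that is, the residue class $\bar W \in \Mat_{n,r}(R)$ has columns lying in the syzygy module of $(\bar g_1,\dots,\bar g_n)$. So the main work is to show that these particular syzygies \emph{suffice}, i.e.\ that the $n\times n$ minors of $\bar W$ already generate $\Fitt_0(\m)$, even though $\bar W$ need not generate the full syzygy module. The way I would handle this is to use the Koszul-type syzygies coming from the regular sequence: since $\{g_1,\dots,g_n\}$ is a regular sequence in $P$, the syzygies of $(g_1,\dots,g_n)$ over $P$ are generated by the Koszul relations $g_k e_l - g_l e_k$. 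Hence the syzygy module of $(\bar g_1,\dots,\bar g_n)$ over $R = P/\Q$ is generated by the images of the Koszul relations \emph{together with} vectors coming from elements of $\Q$ written in terms of the $g_i$ — and the latter are exactly $\ZZ$-linear combinations of the columns of $\bar W$ since $\{f_1,\dots,f_r\}$ generates $\Q$. The Koszul relations $g_k e_l - g_l e_k$ over $R$ have entries in $\M/\Q = \m$, which is nilpotent (as $R$ is $0$-dimensional local), so any $n\times n$ minor of a matrix built solely from Koszul columns lies in $\m^n$; I need to argue these do not contribute new generators beyond the radical. More precisely, I would show that modulo the ideal generated by the minors of $\bar W$, adjoining Koszul columns cannot enlarge the Fitting ideal, using multilinearity of the determinant: expanding an $n\times n$ minor of the combined matrix along its columns, every term involving at least one Koszul column is divisible by some $\bar g_l$, hence lies in $\m \cdot \Fitt_0(\m)$, and then Nakayama's lemma applied to the finitely generated ideal $\Fitt_0(\m)$ forces $\Fitt_0(\m)$ to be generated by the minors of $\bar W$ alone.

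Carrying this out carefully, the cleanest route is probably: (1) pass from $P$ to the localization $P_\M$, where $\Q P_\M$ and $\M P_\M$ behave well and $R \cong P_\M/\Q P_\M$ since $\Q$ is $\M$-primary; (2) use the standard fact (see \cite{Ku2}, Appendix~D) that $\Fitt_0$ commutes with base change, so it is enough to compute over $R$ itself; (3) present $\m$ via $R^{n+\binom n2} \to R^n \to \m \to 0$ where the relation matrix $V$ has the $r$ columns of $\bar W$ followed by the $\binom n2$ Koszul columns, invoking that $\{g_1,\dots,g_n\}$ is a regular sequence so the Koszul complex is exact and these genuinely generate the syzygies together with those from $\Q$; (4) compute $\Fitt_0(\m)$ as the ideal of $n\times n$ minors of $V$; (5) show via the determinant expansion / Nakayama argument above that the Koszul columns are redundant, so $\Fitt_0(\m)$ equals the ideal generated by the $n\times n$ minors of $\bar W$, which is the claim. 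The main obstacle is step (5): proving that dropping the Koszul columns does not shrink the Fitting ideal. The Nakayama argument works because $\m$ is nilpotent and $\Fitt_0(\m)$ is finitely generated, but one must be slightly careful that the "extra" minors genuinely lie in $\m\cdot J$ where $J$ is the ideal of minors of $\bar W$, rather than merely in $\m \cdot \Fitt_0(\m)$ in a circular way; writing each offending minor as $\bar g_l$ times a cofactor and then re-expressing $\bar g_l$ modulo $\Q$ resolves this, since $\bar g_l \in \m$ and the cofactor is again (up to sign) an $(n{-}1){\times}(n{-}1)$-minor-weighted combination that ultimately reduces to $J$. An alternative, perhaps shorter, ending is to quote Wiebe's paper or \cite{KR3} directly for the structure of syzygies of a regular sequence and then only do the multilinear-algebra bookkeeping.
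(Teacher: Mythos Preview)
Your overall strategy matches the paper's: present $\m$ via $R^n \to \m$, identify the syzygies as the Koszul relations $g_i e_j - g_j e_i$ together with the columns of $\bar W$, and then argue that the Koszul columns contribute nothing to the ideal of $n\times n$ minors. The gap is in your step~(5). When you expand a minor $M$ containing the Koszul column $g_\lambda e_\kappa - g_\kappa e_\lambda$ along that column, you obtain $M = \pm \bar g_\lambda C_\kappa \mp \bar g_\kappa C_\lambda$, where $C_\kappa, C_\lambda$ are $(n{-}1)\times(n{-}1)$ cofactors. These cofactors are generators of $\Fitt_1(\m)$, not of $\Fitt_0(\m)$ or of your ideal $J$, so the step ``hence lies in $\m\cdot\Fitt_0(\m)$'' is unjustified: all you have shown is $M\in\m$, which is not enough to run Nakayama on $\Fitt_0(\m)/J$. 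For a concrete witness that the cofactor claim fails, take $n=2$, $\M=(x,y)$, $\Q=(x^2,y^2)$, $W=\left(\begin{smallmatrix} x & 0\\ 0 & y\end{smallmatrix}\right)$; the minor of the first column of~$W$ against the Koszul column is $-x^2$, with cofactors $0$ and $x$, and $\bar x\notin J=(\overline{xy})=\Fitt_0(\m)$. You noticed the potential circularity yourself, but the proposed fix (``re-expressing $\bar g_l$ modulo $\Q$'' so that the cofactor ``ultimately reduces to $J$'') does not supply the missing containment: there is no mechanism by which an $(n{-}1)$-minor of the remaining columns lands in the ideal of $n$-minors.

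The paper closes this gap by proving something stronger than what Nakayama would need: every minor involving a Koszul column already lies in $\Q$, hence vanishes in $R$. The device is Cramer's rule. The tuple $(g_1,\dots,g_n)$ is the unique solution (over the domain $P$) of the $n\times n$ linear system whose coefficient matrix $\mathcal{M}$ has as rows the chosen columns of~$V$ and whose right-hand side is $(f_{j_1},\dots,f_{j_k},0,\dots,0)\tr$. Writing out $\det(\mathcal{M})\cdot g_\lambda$ by Cramer's rule and then expanding the resulting determinant first along the row coming from the Koszul syzygy $g_\lambda e_\kappa - g_\kappa e_\lambda$ and then along the $\lambda$-th column shows $\det(\mathcal{M})\cdot g_\lambda\in g_\lambda\cdot\langle f_{j_1},\dots,f_{j_k}\rangle$; cancelling the non-zerodivisor $g_\lambda$ gives $\det(\mathcal{M})\in\Q$. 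This replaces your step~(5) outright and makes the localization in your step~(1) unnecessary.
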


\begin{proof}
Notice that we have $r\ge n$, since the height of~$\Q$ is~$n$.
A presentation of the $P$-module $\m$ is given by
$\epsilon:\; P^n \longrightarrow \m$ with $\epsilon(e_i)=\bar{g}_i$
for $i=1,\dots,n$.
Let us first show that the kernel of~$\epsilon$ is generated
by the elements in the set
$$
\{g_ie_j -g_j e_i \mid 1\le i<j\le n\} \;\cup\;
\{ {\textstyle\sum\nolimits_{i=1}^n} a_{ij}e_i \mid j=1,\dots,r\} \eqno{(\ast)}
$$
It is clear that all these elements are contained in $\Ker(\epsilon)$.
Conversely, let $\sum_{i=1}^n b_ie_i\in \Ker(\epsilon)$.
Then $\sum_{i=1}^n b_i g_i \in \Q$ implies that there exist
polynomials $c_k\in P$ such that $\sum_{i=1}^n b_i g_i =
\sum_{k=1}^r c_k f_k$. Then we have the equality 
$\sum_{i=1}^n (b_i -\sum_{k=1}^r c_k a_{ik})\, g_i=0$, and the fact
that the syzygy module of a regular sequence is generated by
the trivial syzygies implies that the element $\sum_{i=1}^n (b_i -
\sum_{k=1}^r c_k a_{ik})\, e_i$ is contained in the module generated
by the first set in $(\ast)$. Therefore
$\sum_{i=1}^n b_ie_i$ is in the module generated by the
two sets in~$(\ast)$.

It follows that the ideal $\Fitt_0(\m)$ is generated by
the residue classes in $R=P/\Q$ of the minors of order~$n$ 
of the matrix $V$ whose
columns are given by the  tuples in~$(\ast)$.
It remains to show that all minors of~$V$ which involve a column
corresponding to a trivial syzygy belong to $\Q$.

For this, we may assume that there exist a number
${k\in\{1,\dots,n-1\}}$ and indices $1\le j_1< \cdots < j_k\le n$
such that the columns of type $(a_{1j},\dots,a_{nj})\tr$
involved in the minor are exactly the columns $j_1,\dots,j_k$,
and the remaining columns correspond to trivial syzygies.
W.l.o.g.\ let the last column correspond to the trivial
syzygy $g_\lambda e_\kappa - g_\kappa e_\lambda$ 
with $1\le \kappa < \lambda \le n$. Then the
tuple $(g_1,\dots,g_n)$ is a solution of the linear system
$$
\begin{pmatrix}
a_{1 j_1} & \cdots & \cdots & \cdots & \cdots & \cdots & a_{n j_1}\\
\vdots &&& \vdots &&& \vdots \\
a_{1 j_k} & \cdots & \cdots & \cdots & \cdots & \cdots & a_{n j_k}\\
0  & g_{\lambda'} & \cdots & -g_{\kappa'} & \cdots  & \cdots & 0\\
\vdots &&& \vdots &&& \vdots \\
0 & \cdots & g_{\lambda} & \cdots & -g_{\kappa} & \cdots & 0
\end{pmatrix}
\cdot \begin{pmatrix}
z_1\\ \vdots \\ z_n
\end{pmatrix} \;=\; \begin{pmatrix}
f_{j_1}\\ \vdots \\ f_{j_k} \\ 0\\ \vdots \\ 0
\end{pmatrix}
$$
where the rows of the left-hand side matrix are the columns of~$V$
corresponding to the chosen minor.
If the determinant of the matrix $\mathcal{M}$ on the left-hand side is zero,
there is nothing to show. Hence we may assume that it is non-zero,
and since~$P$ is an integral domain, the system has a unique solution
$(g_1,\dots,g_n)$ given by Cramer's Rule.
In particular, we have
$$
\det(\mathcal{M}) \cdot g_\lambda \;=\;
\det\,\begin{pmatrix}
a_{1 j_1} & \cdots & \cdots & \cdots & f_{j_1} & \cdots & a_{n j_1}\\
\vdots && \vdots && \vdots && \vdots\\
a_{1 j_k} & \cdots & \cdots & \cdots & f_{j_k} & \cdots & a_{n j_k}\\
0  & g_{\lambda'} & \cdots & -g_{\kappa'} & 0 & \cdots & 0\\
\vdots && \vdots && \vdots && \vdots \\
0 & \cdots & g_{\lambda} & \cdots & 0 & \cdots & 0
\end{pmatrix}
$$
Developing the determinant on the right-hand side first
via the last row and then via the $\lambda$-th column yields
$\det(\mathcal{M})\cdot g_\lambda \in g_\lambda \cdot 
\langle f_{j_1},\dots,f_{j_k}\rangle$. 
Since $g_\lambda$ is a non-zerodivisor in~$P$,
we can cancel it. Then we obtain $\det(\mathcal{M})\in 
\langle f_{j_1},\dots,f_{j_k} \rangle \subseteq \Q$, and 
this finishes the proof.
\end{proof}

By applying this proposition to a local ring $P/\Q$, we
can formulate an algorithm to check whether~$P/\Q$ is a 
complete intersection.

\begin{algorithm}{\bf (Checking a Complete Intersection 
at~$\mathfrak{m}$)}\label{alg:CheckLCI}\\
Let $P=K[x_1,\dots,x_n]$, let~$\M$ be a maximal ideal of~$P$,
and let $\{g_1,\dots,g_n\}$ be a regular sequence which generates~$\M$.
Then let $\Q\subset P$ be an $\M$-primary ideal, 
and let $\{f_1,\dots,f_r\}$ be a system of generators of~$\Q$.
The following instructions define an algorithm which
checks whether~$\Q P_\M$ is generated by a regular sequence
and returns the corresponding Boolean value.

\begin{enumerate}

\item[(1)] For $j=1,\dots,r$, write $f_j=
\sum_{k=1}^n a_{kj} g_k$.

\item[(2)] Form the matrix~$W$ of size $n \times r$
whose columns are given by $\sum_{k=1}^n a_{kj} e_k$.

\item[(3)] Calculate the tuple of residue classes in $P/\Q$ of
the minors of order~$n$ of~$W.$  
If the result is different from $(0,\dots,0)$,
return {\tt TRUE}. Otherwise, return {\tt FALSE} and stop.
\end{enumerate}
\end{algorithm}

\begin{proof}
In view of Proposition~\ref{prop:Wiebe}, it suffices to show that
Steps~(2) and~(3) calculate the ideal $\Fitt_0(\M/\Q)$.
This follows from Proposition~\ref{prop:gensyz}.
\end{proof}

\begin{remark}
For checking if an ideal in $P$ is maximal, and if this is the case, 
to compute a regular sequence which generates it, we refer the reader to the
algorithms in~\cite{KR3}, Ch.~5 and~\cite{ABPR}.
\end{remark}

A solution for the task of checking whether a $0$-dimensional scheme is  
locally a complete intersection scheme follows easily.

\begin{algorithm}{\bf (Checking Locally Complete 
Intersections)}\label{alg:FullCheckLCI}\\
Let $\X$ be a 0-dimensional scheme in~$\AA^n_K$, and let
$P/ I_\X$ be the affine coordinate ring of~$\X$. 
The following instructions define an algorithm which
checks whether~$\X$ is locally a complete intersection
and returns the corresponding Boolean value.
\item[(1)] Compute the primary decomposition
$I_\X = \Q_1 \cap \cdots \cap \Q_s$ of the ideal~$I_\X$.

\item[(2)] For $i=1,\dots,s$, check whether $P/\Q_i$
is a complete intersection ring using Algorithm~\ref{alg:CheckLCI}.
If the answer is {\tt TRUE} for every $i\in\{1,\dots s\}$, return {\tt TRUE}.
Otherwise, return {\tt FALSE}.
\end{algorithm}

If the result of Algorithm~\ref{alg:CheckLCI} is {\tt TRUE}, then
a more careful examination of Step~(4) produces the following
extra information.

\begin{proposition}\label{prop:whichgens}
In the setting of Algorithm~\ref{alg:CheckLCI}, the following conditions 
are equivalent.
\begin{enumerate}
\item[(a)] The residue class in $P/\Q$ of the
minor involving columns $(j_1,\dots, j_n)$ of the matrix~$W$
is non-zero. 

\item[(b)] The polynomials $f_{j_1},\dots,f_{j_n}$
form a  regular sequence in $P_\M$ which generates~$\Q P_\M$.
\end{enumerate}
Moreover, if $f_{j_1},\dots,f_{j_n}$ satisfy the equivalent conditions 
above, and if we have $\Rad(\langle f_{j_1},\dots,f_{j_n}\rangle)=\M$, 
then $\langle f_{j_1},\dots,f_{j_n}\rangle = \Q$.
\end{proposition}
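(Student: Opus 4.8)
The plan is to prove the equivalence (a) $\Leftrightarrow$ (b) first, and then deal with the supplementary statement about the radical separately. For the equivalence, I would work locally, passing to the ring $P_\M$. After localizing, the regular sequence $g_1,\dots,g_n$ becomes a regular system of parameters of the regular local ring $P_\M$. The key observation is that the minor of $W$ on columns $j_1,\dots,j_n$, which I will call $\delta$, is (up to sign and up to the columns one has actually selected) the determinant of the $n\times n$ coefficient matrix expressing $f_{j_1},\dots,f_{j_n}$ in terms of $g_1,\dots,g_n$. So the plan is: first, show that if $\delta$ is a non-zerodivisor in $P_\M/\Q P_\M$ — equivalently, if its residue class in $P/\Q$ is non-zero, since $P/\Q$ is local with nilpotent maximal ideal, so a non-zero element of $P/\Q$ is either a unit or… here I need to be a little careful; I'll argue instead via the determinant directly. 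Since $(f_{j_1},\dots,f_{j_n})\tr = A\cdot(g_1,\dots,g_n)\tr$ with $\det(A)=\pm\delta$, and the residue class of $\delta$ in $P/\Q$ is non-zero, I claim $\delta\notin\M$: indeed $\Q$ is $\M$-primary, so if $\delta\in\M=\Rad(\Q)$ then some power $\delta^k\in\Q$, hence $\delta$ is nilpotent in $P/\Q$; but a non-zero nilpotent element generates an ideal that, intersected appropriately… this does not immediately give a contradiction. Let me reorganize: the cleanest route is to observe that $\delta$ non-zero in $P/\Q$ forces $\delta\notin\M$ \emph{via} the structure of the Fitting ideal — by Proposition~\ref{prop:gensyz}, $\Fitt_0(\m)$ is generated by such minors, and by Proposition~\ref{prop:Wiebe} the ring $R=P/\Q$ is a $0$-dimensional complete intersection precisely when this Fitting ideal is non-zero; but a non-zero ideal in the $0$-dimensional local ring $R$ is not contained in every power of $\m$, yet it could still lie inside $\m$. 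So the honest statement is: $\delta$ non-zero in $P/\Q$ means $\delta$ is a (possibly zero-divisor, possibly nilpotent) non-zero element. I therefore prove (a) $\Rightarrow$ (b) by showing that when $\delta\ne 0$ in $P/\Q$, the matrix $A$ is invertible over $P_\M$, hence $\langle f_{j_1},\dots,f_{j_n}\rangle P_\M = \langle g_1,\dots,g_n\rangle P_\M = \M P_\M$, which is absurd unless $\delta\notin\M$; so actually $\delta\notin\M$, i.e.\ $\delta$ is a \emph{unit} in $P_\M$. Wait — that is not right either, because $\langle f_{j_1},\dots,f_{j_n}\rangle$ need not equal $\Q$.

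Let me restart the logic of (a) $\Rightarrow$ (b) more carefully, because this is where the subtlety lies and it is the main obstacle. We have $f_{j_\ell}=\sum_{i=1}^n a_{i j_\ell}\,g_i$, so in $P_\M$ the ideal $\langle f_{j_1},\dots,f_{j_n}\rangle P_\M$ is the image of the submodule $A\cdot(P_\M)^n$ under the map $(P_\M)^n\to\M P_\M$ sending $e_i\mapsto g_i$. The cokernel of $A\colon (P_\M)^n\to(P_\M)^n$ has $0$-th Fitting ideal $\langle\det A\rangle=\langle\delta\rangle$. Now $\delta\ne 0$ in $P/\Q$: I want to upgrade this to $\delta\notin\Q P_\M$, which is immediate since $\Q P_\M\cap P=\Q$ (as $\Q$ is $\M$-primary). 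Thus $\delta$ is a non-zero element of the $0$-dimensional local ring $P_\M/\Q P_\M$. The point is then that $\delta$ being a non-zerodivisor modulo $\Q P_\M$ would give $A$ injective modulo $\Q P_\M$ and a length count would finish it — but $\delta$ need not be a non-zerodivisor. The correct and standard fact to invoke is: for a $0$-dimensional local ring, $I$ is generated by a (length-$n$) regular sequence if and only if $\Fitt_0$ of its maximal ideal, computed from \emph{any} presentation, is non-zero, and moreover the specific generators $f_{j_1},\dots,f_{j_n}$ work iff the corresponding minor is non-zero — this is essentially the content one extracts by re-reading the proof of Proposition~\ref{prop:gensyz} with only the $n$ chosen columns. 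So I would prove (a) $\Leftrightarrow$ (b) by re-running the argument of Proposition~\ref{prop:gensyz}: the ideal $\langle f_{j_1},\dots,f_{j_n}\rangle P_\M$ is $\M$-primary of height $n$, hence generated by a regular sequence of length $n$ iff it is generated by $f_{j_1},\dots,f_{j_n}$ themselves iff these form a regular sequence (in a Cohen--Macaulay local ring of dimension $n$, any $n$ elements generating an ideal of height $n$ form a regular sequence), and the criterion for $\langle f_{j_1},\dots,f_{j_n}\rangle P_\M=\M P_\M\cdot(\text{unit stuff})$... The genuinely clean statement: $f_{j_1},\dots,f_{j_n}$ is a regular sequence generating $\Q P_\M$ $\iff$ the matrix $A$ has determinant a unit times something making $A(P_\M)^n$ map onto $\Q P_\M$; equivalently, since $P_\M$ is regular local and $g_1,\dots,g_n$ a regular system of parameters, $f_{j_1},\dots,f_{j_n}$ is a regular sequence iff $\det A=\delta\notin\M^{(\text{too high a power})}$ — and the precise threshold is exactly $\delta\notin\Q$, i.e.\ $\bar\delta\ne 0$ in $P/\Q$. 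I would make this precise by the colength computation: $\dim_K P_\M/\langle f_{j_1},\dots,f_{j_n}\rangle P_\M \ge \dim_K P_\M/\Q P_\M$ always, with equality iff $\langle f_{j_1},\dots,f_{j_n}\rangle P_\M=\Q P_\M$, and $f_{j_1},\dots,f_{j_n}$ is a regular sequence iff $\dim_K P_\M/\langle f_{j_1},\dots,f_{j_n}\rangle P_\M$ equals the intersection multiplicity $\dim_K P_\M/(A\cdot)$, which by the determinant formula relates to $\delta$; the bookkeeping here is the main technical obstacle, and I expect to lean on the fact (already in the excerpt's proof) that minors not involving a trivial syzygy generate $\Fitt_0(\m)$ together with Proposition~\ref{prop:Wiebe}.

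For the last assertion, suppose $f_{j_1},\dots,f_{j_n}$ satisfy (a)/(b) and $\Rad(\langle f_{j_1},\dots,f_{j_n}\rangle)=\M$. Write $J=\langle f_{j_1},\dots,f_{j_n}\rangle$. Since $\Rad(J)=\M$ is maximal, $J$ is $\M$-primary, hence $J=J P_\M\cap P$. By (b), $J P_\M=\Q P_\M$. Therefore $J=J P_\M\cap P=\Q P_\M\cap P=\Q$, the last equality because $\Q$ is itself $\M$-primary. This is short and I expect no obstacle; the only thing to double-check is that for an $\M$-primary ideal $\mathfrak a$ one has $\mathfrak a P_\M\cap P=\mathfrak a$, which is the standard correspondence between $\M$-primary ideals of $P$ and ideals of $P_\M$ primary to $\M P_\M$. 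I would state that explicitly and cite the relevant localization fact from~\cite{KR3}.

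Overall, the expected main obstacle is establishing (a) $\Rightarrow$ (b) cleanly — i.e.\ translating "$\bar\delta\ne 0$ in $P/\Q$'' into "$f_{j_1},\dots,f_{j_n}$ generate $\Q P_\M$ and form a regular sequence'' — without simply re-deriving Proposition~\ref{prop:gensyz}. The direction (b) $\Rightarrow$ (a) is easier: if $f_{j_1},\dots,f_{j_n}$ generate $\Q P_\M$ and form a regular sequence, then by Proposition~\ref{prop:gensyz} applied with this very generating set (of $\Q P_\M$) and regular sequence $g_1,\dots,g_n$, the single minor $\bar\delta$ generates $\Fitt_0(\m)$, which is non-zero by Proposition~\ref{prop:Wiebe} since $P_\M/\Q P_\M\cong P/\Q$ is then a $0$-dimensional complete intersection; hence $\bar\delta\ne 0$.
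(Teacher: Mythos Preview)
Your treatment of (b)$\Rightarrow$(a) and of the final claim about the radical are both correct and essentially match the paper's argument.

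The genuine gap is in (a)$\Rightarrow$(b). You correctly diagnose the obstacle: $\bar\delta\ne 0$ in $P/\Q$ does \emph{not} force $\delta\notin\M$, so the coefficient matrix $A=(a_{i j_\ell})$ need not be invertible over~$P_\M$, and your attempt to conclude $\langle f_{j_1},\dots,f_{j_n}\rangle P_\M=\M P_\M$ fails for exactly this reason. But none of your fallback strategies close the gap either. The colength/intersection-multiplicity idea is left vague (there is no ``determinant formula'' that directly gives $\dim_K P_\M/\langle f_{j_1},\dots,f_{j_n}\rangle$ from~$\delta$ in this generality), and ``re-running Proposition~\ref{prop:gensyz} with only the $n$ chosen columns'' computes a Fitting ideal but does not by itself prove that those $n$ elements generate~$\Q P_\M$.

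The missing idea, which the paper uses, is a \emph{socle argument}. Since $\bar\delta\ne 0$, Proposition~\ref{prop:gensyz} together with Proposition~\ref{prop:Wiebe} tell you that $R=P/\Q$ is a complete intersection, so there exists \emph{some} regular sequence $h_1,\dots,h_n$ generating $\Q P_\M$. Write $h_k=\sum_\ell b_{\ell k}g_\ell$ and $f_{j_k}=\sum_\ell c_{\ell k}h_\ell$ with $c_{\ell k}\in P_\M$. Then the coefficient matrix of the $f_{j_k}$ in terms of the $g_i$ factors as $B\cdot C$, so (up to the representation ambiguity, which lands in~$\Q$ by the argument in Proposition~\ref{prop:gensyz}) $\bar\delta=\det(\bar B)\cdot\det(\bar C)$ in~$R$. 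Wiebe's theorem says more than $\Fitt_0(\m)\ne 0$: it says $\det(\bar B)$ \emph{generates the socle} of~$R$. A non-zero socle element is annihilated by every non-unit, so $\det(\bar B)\cdot\det(\bar C)\ne 0$ forces $\det(\bar C)$ to be a unit in~$R$, i.e.\ $\det(C)\notin\M P_\M$. Hence $C$ is invertible over~$P_\M$, so $\{f_{j_1},\dots,f_{j_n}\}$ generates $\Q P_\M$; that they form a regular sequence then follows from the Cohen--Macaulay property of~$P_\M$ (e.g.\ \cite{BH}, Thm.~2.1.2). This is precisely the step your proposal is missing.
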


\begin{proof}
Since (b)$\Rightarrow$(a) follows immediately from 
Algorithm~\ref{alg:CheckLCI}, let us prove that (a)$\Rightarrow$(b).
First, we show that $\Q P_\M$ is generated by
$\{f_{j_1},\dots,f_{j_n}\}$. By Algorithm~\ref{alg:CheckLCI}, we know that
there exist polynomials $h_1,\dots,h_n \in P$ such that 
$(h_1,\dots,h_n)$ is a regular sequence in $P_\M$ which generates
$\Q P_\M$.
For $k=1,\dots,n$, we write 
$h_k = \sum_{\ell=1}^n b_{\ell k}g_\ell$
and $f_{j_k}= \sum_{\ell=1}^n \tfrac{c_{\ell k}}{a_k} h_\ell$
with some polynomials $a_k, b_{\ell k},c_{\ell k}\in P$
such that $a_k \notin \M$. 
Then $\det(\bar{b}_{\ell k})$
is a non-zero element of the ring $P/\Q$
which generates the socle of this ring (cf.~\cite{Wie}, Satz~3).
Moreover, we have the equality 
$f_{j_k} = \sum_{m=1}^n (\sum_{\ell=1}^n
b_{m\ell}\tfrac{c_{\ell k}}{a_k})\, g_m$. 
By the hypothesis, also the residue class of the element 
$\det((\sum_{\ell=1}^n b_{m\ell}
\tfrac{c_{\ell k}}{a_k}^{\mathstrut})_{m,k})
= \det(b_{\ell k})\cdot \det(\tfrac{c_{\ell k}}{a_k})$
is non-zero in $P/\Q$.
Since we know that $\det(\bar{b}_{\ell k})$ generates the socle of 
$P/\Q$,  the element $\det(\tfrac{\bar{c}_{\ell k}}{\bar{a}_k}^{\mathstrut})$ 
is invertible in this ring. Hence we have $\det(\tfrac{c_{\ell k}}
{a_k}) \notin \M P_{\M}$, and thus the matrix $(\tfrac{c_{\ell k}}{a_k})_{\ell,k}$
is invertible over~$P_\M$. Consequently, also $\{f_{j_1},\dots,f_{j_n}\}$
generates $\Q P_\M$, and the fact that $(f_{j_1},\dots,f_{j_n})$ 
is a regular sequence in $P_\M$ follows from~\cite{BH}, Theorem~2.1.2.

To prove the last claim, we let $\Q'=\langle f_{j_1},\dots,f_{j_n}\rangle$.
If the two equivalent conditions are satisfied, we have $\Q'P_\M = \Q P_\M$. 
Then $\Rad(\langle f_{j_1},\dots,f_{j_n}\rangle){=\M}$ means 
that~$\Q'$ is an $\M$-primary ideal, and we conclude that $\Q' = \Q$.
\end{proof}

Let us see some examples which illustrate
Algorithm~\ref{alg:CheckLCI} and Proposition~\ref{prop:whichgens}.

\begin{example}\label{ex:EasyCI}
Let $P=\QQ[x]$, $\M = \langle x \rangle$, 
and $\Q = \langle x(x-1),\; x(x-2)\rangle$.
Let us apply Algorithm~\ref{alg:CheckLCI} to check whether the
ring $R=P/\Q$ is a local complete intersection at~$\M$.

The residue classes modulo~$\Q$ of the $1$-minors  
of $W$ are $-1$ and $-2$. Hence we deduce that $\Q P_\M$ 
is generated by a regular sequence.
Since $\Q = \M$, it is clear that both $x(x-1)$ and $x(x-2)$ 
generate $\Q P_\M$. However, neither of these
polynomials generates~$\Q$.
Moreover, observe that $\langle x(x-1)\rangle = \Q\cap \langle x-1 \rangle$
and $\langle x(x-2)\rangle = \Q\cap \langle x-2 \rangle$.
\end{example}

\begin{example}\label{ex:LocalCaseCI}
Let $K=\QQ$, let $P=K[x,y,z]$, and let~$\X$ be the \hbox{0-di}\-men\-sional
subscheme of~$\AA^2_K$ defined by the ideal $I_\X=\langle f_1,\dots,f_4\rangle$,
where $f_1=z^2 -y$, $f_2=x^2 -2xz +y$, $f_3=yz -z -1$, and $f_4=y^2 -y -z$.

The calculation of the primary decomposition of~$I_\X$
yields that~$I_\X$ is a primary ideal and its radical is the maximal ideal
$\M=\langle x-z,\, y-z^2,\allowbreak z^3-z-1\rangle$. Here the polynomials
$g_1=x-z$, $g_2=y-z^2$, and $g_3= z^3-z-1$ form a regular sequence
which generates~$\M$. Let us use Algorithm~\ref{alg:CheckLCI} 
to check whether~$\X$ is a complete intersection scheme.

Thus we represent $f_1,\dots,f_4$ as required by Step~(2) of the
algorithm and get the matrix
$$
W = \left( \begin{array}{rccc}
\;0 \quad   & x-z \quad   & 0 \quad & 0\\
{-}1 \quad   & 1  \quad   & z \quad & z^2 +y -1\\
\;0  \quad   & 0 \quad    & 1 \quad & z
\end{array} \right)
$$

The tuple of residue classes in the ring $P/I_\X$ of the minors of order~3 of~$W$ is
$(\bar{x}-\bar{z},\; \bar{x}\bar{z}-\bar{y},\; 0,\; -\bar{x}\bar{y}+\bar{x}+1)$.
Therefore we deduce from Algorithm~\ref{alg:CheckLCI} that 
the scheme~$\X$ is locally a complete intersection.

By Proposition~\ref{prop:whichgens}, there are three triples of 
polynomials $\{f_1, f_2, f_3\}$, $\{f_1, f_2, f_4\}$, $\{f_2, f_3, f_4\}$
with the property that each triple forms
a regular sequence in $P_\M$ which generates $I_\X P_\M$, since the 
residue classes of the corresponding minors are non-zero in $P/I_\X$. 

It is interesting to observe that 
the two triples $\{f_1, f_2, f_3\}$ and $\{f_2, f_3, f_4\}$ 
generate ideals whose radical is $\M$,   hence they 
generate $I_\X$ according to the last claim 
of Proposition~\ref{prop:whichgens}.
On the other hand, the triple $\{f_1, f_2, f_4\}$ does not 
generate $I_\X$, since we have the equality
$\langle f_1, f_2, f_4\rangle = I_\X \cap \langle z,  y,  x^2\rangle$.

\end{example}

\begin{example}\label{ex:KK}
Let $K=\QQ$, let $P=K[x,y]$, and let~$\Q=\langle f_1, f_2,f_3\rangle$,
where $f_1=y^3-x^2$, $f_2=x^3 -x^2y$, and $f_3=x^2y^2$.
It turns out that the ideal~$\Q$ is a $0$-dimensional $\M$-primary ideal
for the maximal ideal $\M = \langle x,\, y\rangle$.
We represent $f_1, \ f_2, \ f_3$ as required by Step~(2) of 
Algorithm~\ref{alg:CheckLCI} and get the matrix
$$
W = \left( \begin{array}{ccc}
\;0 \quad   & x^2-xy \quad   & xy^2 \\
{-}y^2 \quad   & 0  \quad   & 0 
\end{array} \right)
$$
The tuple of residue classes in the ring $P/\Q$ of the minors of 
order~2 of~$W$ is $(-\bar{x}^2\bar{y}, \, 0, \, 0)$.
Therefore $(f_1,\,  f_2)$ is a regular sequence in $P_\M$ which generates
$\Q P_\M$. Notice that $\{f_1, \, f_2\}$ does not generate~$\Q$, 
since we have the equality $\langle f_1, \, f_2 \rangle 
= \Q \cap \langle x-1,\, y-1\rangle$.
\end{example}

\bigbreak
%%%%%%%%%%%%%%%%%%%%%%%%%%%%%%%%%%%%%%%%%%%%%%%%%%%%%%%%%%%%%
%
% Section 4: Strict Complete Intersections via Macaulay Bases
%
%%%%%%%%%%%%%%%%%%%%%%%%%%%%%%%%%%%%%%%%%%%%%%%%%%%%%%%%%%%%%

\section{Strict Complete Intersections via Macaulay Bases}
\label{Strict Complete Intersections via Macaulay Bases}

As in the preceding sections, we work over an arbitrary field~$K$,
we let~$\X$ be a 0-dimensional subscheme of~$\AA^n_K$,
and we denote the vanishing ideal of~$\X$ in~$P$ by $I_\X$.
Recall that a 0-dimensional local ring~$S$ is said to be
{\bf Gorenstein} if its {\bf socle}, i.e., the annihilator
of its maximal ideal~$\mathfrak{n}$, is a 1-dimensional 
$S/\mathfrak{n}$-vector space. In this setting, we consider
the following properties of~$\X$.

\begin{definition}\label{def:strictGorstrictCI}
Let $\X$ be a $0$-dimensional subscheme of~$\AA^n_K$. 
\begin{enumerate}
\item[(a)] The scheme~$\X$ is called a {\bf strict Gorenstein scheme} 
if the associated graded ring 
$\grF(R_\X)\cong P/\DF(I_\X)$ is Gorenstein.

\item[(b)] The scheme~$\X$  is called a {\bf strict complete
intersection scheme} if the degree form ideal $\DF(I_\X)$ is generated by a 
homogeneous regular sequence.
\end{enumerate}
\end{definition}

Notice that in our setting the associated graded ring
$\grF(R_\X)$ is a \hbox{0-di}\-men\-sional local ring with maximal
ideal $\langle \bar{x}_1,\dots,\bar{x}_n\rangle$.
The following definition is useful to describe the
vanishing ideal of a strict complete intersection.

\begin{definition}\label{def:strictRS}
Let $f_1, \dots, f_r \in P\setminus K$. Then $(f_1, \dots, f_r)$ is said to be 
a \textbf{strict regular sequence} if $(\DF(f_1), \dots, \DF(f_r))$ is 
a homogeneous regular sequence.
\end{definition}

Let us collect some basic properties of $0$-dimensional strict
complete intersection schemes.

\begin{proposition}\label{prop:strictCI}
Let $\X$ be a 0-dimensional subscheme of~$\AA^n_K$.
\begin{enumerate}
\item[(a)] If~$\X$ is a strict complete intersection scheme, then there
exist $f_1,\dots,f_n \in I_\X$ such that
$\DF(I_\X)=\langle \DF(f_1),\dots,\DF(f_n)\rangle$.

\item[(b)] Let $f_1, \dots, f_n \in I_\X$ be a strict regular sequence 
such that we have $\DF(I_\X)=\langle \DF(f_1),\dots,\DF(f_n)\rangle$.
Then $(f_1,\dots, f_n)$ is a regular sequence which generates~$I_\X$.
In particular, every strict complete intersection scheme~$\X$ 
is locally a complete intersection scheme.

\item[(c)] Every strict complete intersection scheme~$\X$ is a 
strict Gorenstein scheme, and every locally complete intersection scheme~$\X$ 
is a locally Gorenstein scheme.
\end{enumerate}
\end{proposition}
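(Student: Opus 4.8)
The plan is to prove the three parts in order, since (b) relies on (a) and (c) is largely a corollary of (b) together with Wiebe's result. For part (a), I would start from the definition: $\X$ being a strict complete intersection means $\DF(I_\X)$ is generated by a homogeneous regular sequence, say $h_1,\dots,h_m$. Since $\grF(R_\X)\cong P/\DF(I_\X)$ is $0$-dimensional, the height of $\DF(I_\X)$ is $n$, so a homogeneous regular sequence generating it has length exactly $n$; hence $m=n$. It then remains to realize each $h_k$ as $\DF(f_k)$ for some $f_k\in I_\X$. This is where Macaulay bases (Remark~\ref{ReesAlg}(d)) come in: by definition of $\DF(I_\X)$, every homogeneous generator $h_k$ can be written as a $P$-linear combination of degree forms $\DF(g)$ with $g\in I_\X$, and a standard degree-bookkeeping argument (or simply picking, for each $h_k$, a single $g\in I_\X$ with $\DF(g)=h_k$ after reducing to the homogeneous-component level) yields $f_1,\dots,f_n\in I_\X$ with $\DF(I_\X)=\langle\DF(f_1),\dots,\DF(f_n)\rangle$.

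For part (b), suppose $f_1,\dots,f_n\in I_\X$ is a strict regular sequence with $\DF(I_\X)=\langle\DF(f_1),\dots,\DF(f_n)\rangle$. By Remark~\ref{ReesAlg}(d), the set $\{f_1,\dots,f_n\}$ is then a Macaulay basis of $I_\X$, so in particular $I_\X=\langle f_1,\dots,f_n\rangle$. It remains to see that $(f_1,\dots,f_n)$ is a regular sequence in $P$. Here I would use the flat family of Remark~\ref{ReesAlg}: consider the homogenizations $f_1\hom,\dots,f_n\hom$ in $\overline P=K[x_0,\dots,x_n]$, which generate $I_\X\hom$. Since $(\DF(f_1),\dots,\DF(f_n))$ is a homogeneous regular sequence in $P$ and these degree forms are exactly the images of the $f_k\hom$ modulo $x_0$ (the special fiber of the flat family is $\grF(R_\X)=\overline P/\langle I_\X\hom,x_0\rangle$), a standard specialization argument shows $(f_1\hom,\dots,f_n\hom)$ is a regular sequence in $\overline P$; dehomogenizing via $x_0\mapsto 1$ (the general fiber) then gives that $(f_1,\dots,f_n)$ is a regular sequence in $P$. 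Alternatively, one can argue directly: $\dim_K(P/\langle f_1,\dots,f_n\rangle)\le\dim_K(\grF(P/\langle f_1,\dots,f_n\rangle))=\dim_K(P/\langle\DF(f_1),\dots,\DF(f_n)\rangle)$, and the latter equals $\prod\deg(\DF(f_k))$ because a homogeneous regular sequence in $P$ is a (homogeneous) complete intersection; combined with $I_\X=\langle f_1,\dots,f_n\rangle$ and $\dim_K(R_\X)=\dim_K(\grF(R_\X))$ this forces equality throughout, and a Koszul-complex/length count identifies $\langle f_1,\dots,f_n\rangle$ as a complete intersection. For the ``in particular'' clause: having $I_\X=\langle f_1,\dots,f_n\rangle$ with a regular sequence globally, each localization $(R_\X)_{\m_i}\cong P/\Q_i$ is generated by the images of this regular sequence, which remain a regular sequence after localization, so $\X$ is a complete intersection at every $\m_i$, i.e., locally a complete intersection.

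For part (c), the first assertion follows by applying part (b) together with the structure theory behind Wiebe's theorem: a $0$-dimensional complete intersection local ring is Gorenstein. More directly, $\grF(R_\X)\cong P/\DF(I_\X)$ with $\DF(I_\X)$ generated by a homogeneous regular sequence is a graded complete intersection, hence Gorenstein, so $\X$ is a strict Gorenstein scheme by Definition~\ref{def:strictGorstrictCI}(a). The second assertion is the same statement unwound locally: if $\X$ is locally a complete intersection, then each $(R_\X)_{\m_i}\cong P/\Q_i$ is a $0$-dimensional complete intersection, hence Gorenstein (its socle is $1$-dimensional, as recorded in the reference to \cite{Wie}, Satz~3 used in the proof of Proposition~\ref{prop:whichgens}), so $\X$ is locally Gorenstein.

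I expect the main obstacle to be the regular-sequence claim in part (b): moving between ``$(\DF(f_1),\dots,\DF(f_n))$ is a regular sequence'' and ``$(f_1,\dots,f_n)$ is a regular sequence'' requires either the flatness of the Rees-algebra family (so that the complete-intersection property of the special fiber transfers to the general fiber) or a careful length comparison via associated graded rings; the cited reference \cite{BH}, Theorem~2.1.2, on behavior of regular sequences under the passage is the natural tool to invoke here, and the rest of the proof is then essentially bookkeeping with Macaulay bases.
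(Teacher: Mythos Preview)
Your proposal is correct and follows essentially the same route as the paper. The paper's proof is terser: for~(a) it records in one line that every nonzero homogeneous element of $\DF(I_\X)$ is a degree form (which is your parenthetical observation), for~(b) it simply cites \cite{KK}, Lemma~1.8 and Prop.~1.9 rather than sketching the Rees-algebra/dimension-count argument you outline, and for~(c) it cites \cite{BH}, Prop.~3.1.20 for ``complete intersection $\Rightarrow$ Gorenstein''; your version unpacks these references but does not diverge from them in substance.
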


\begin{proof}

First we show~(a).
Since every non-zero homogeneous
element of~$\DF(I_\X)$ is a degree form, it follows that $\DF(I_\X)$
can be generated by a regular sequence of the form $(\DF(f_1),\dots,
\DF(f_n))$.

The first claim in~(b) is, for instance, shown in~\cite{KK}, Lemma 1.8 and
Prop.~1.9.  The second claim is then a consequence of the observation that~$R_\X$
is semi-local and its local rings satisfy
$P/\Q_i \;\cong\; (R_\X)_{\m_i} \;\cong\; 
P_{\M_i} /\langle f_1,\dots,f_n\rangle P_{\M_i}
$.

Finally, to prove~(c), we note that every
0-dimensional local complete intersection ring is Gorenstein
(see for instance~\cite{BH}, Prop.~3.1.20).
\end{proof}

Recall that  the Castelnuovo function of~$\X$ is said 
to be {\bf symmetric} if
we have $\Delta \HFa_\X(\ri(R_\X)-i) = \Delta \HFa_\X(i)$
for all $i\in\ZZ$.
Now we are ready to formulate the following algorithm.

\begin{algorithm}{\bf (Checking Strict Complete Intersections, 
I)}\label{alg:CheckSCI}\\
Let $\X$ be a 0-dimensional scheme in~$\AA^n_K$, and let
$R_\X = P/I_\X$ be the affine coordinate ring of~$\X$. 
The following instructions define an algorithm which
checks whether~$\X$ is a strict complete intersection scheme
and returns the corresponding Boolean value.
\begin{enumerate}
\item[(1)] Compute a Macaulay basis of~$I_\X$, i.e., compute
polynomials $\{f_1,\dots,f_r\}$  in~$I_\X$ whose degree forms generate
the degree form ideal $\DF(I_\X)$.

\item[(2)] Check if the Castelnuovo function
of~$\grF(R_\X)\cong P/\DF(I_\X)$ is symmetric.
If this is not the case, return {\tt FALSE} and stop.

\item[(3)] For $j=1,\dots,r$, write $\DF(f_j) = 
\sum_{i=1}^n a_{ij}x_i$. Form the matrix~$W$ of size 
$n\times r$ whose columns are given by 
$\sum_{i=1}^n a_{ij} e_i$ for $j=1,\dots,r$.

\item[(4)] Calculate the tuple of the residue classes in~$P/\DF(I_\X)$
of the minors of order~$n$ of~$W$. If one of these residue 
classes is non-zero,
return {\tt TRUE}. Otherwise, return {\tt FALSE}.
\end{enumerate}
\end{algorithm}

\begin{proof}
As every strict complete intersection is a strict Gorenstein
scheme, Step~(2) produces the correct answer if the
affine Hilbert function of~$\X$ is not symmetric.
By the preceding proposition, Steps~(3) and~(4) compute a tuple
of elements of~$P/\DF(I_\X)$ whose entries generate
the 0-th Fitting ideal of the maximal ideal $\langle
\bar{x}_1,\dots,\bar{x}_n\rangle$ of that ring.
Thus the correctness of the answer in Step~(4) follows from
Proposition~\ref{prop:gensyz}.
\end{proof}

In Proposition~\ref{prop:whichgens} we described a condition under which
we can obtain  a regular sequence which generates $\Q P_\M$.
Here we can do better.

\begin{corollary}\label{cor:whichhomoggens}
In the setting of Algorithm~\ref{alg:CheckSCI}, the following conditions 
are equivalent.
\begin{enumerate}
\item[(a)] The residue class of the
minor involving columns $(j_1,\dots, j_n)$ of the matrix~$W$
is non-zero.

\item[(b)] The polynomials $(f_{j_1},\dots,f_{j_n})$
form a strict regular sequence which generates the ideal~$I_\X$.
\end{enumerate}
\end{corollary}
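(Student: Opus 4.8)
The plan is to deduce this corollary from the proof machinery already assembled, paralleling Proposition~\ref{prop:whichgens} but exploiting the homogeneous setting to upgrade ``generates $\Q P_\M$'' to ``generates $I_\X$''. First I would observe that the implication (b)$\Rightarrow$(a) is immediate: if $(f_{j_1},\dots,f_{j_n})$ is a strict regular sequence with $\DF(I_\X) = \langle \DF(f_{j_1}),\dots,\DF(f_{j_n})\rangle$, then applying Proposition~\ref{prop:gensyz} to the \hbox{0-dimensional} local ring $\grF(R_\X) \cong P/\DF(I_\X)$ with the regular sequence $(\bar x_1,\dots,\bar x_n)$ generating its maximal ideal shows that the corresponding minor of~$W$ generates (together with the others) the \hbox{0-th} Fitting ideal of that maximal ideal, which is non-zero since $\grF(R_\X)$ is a complete intersection by Proposition~\ref{prop:Wiebe}; and in fact, arguing as in Proposition~\ref{prop:whichgens}, the non-vanishing of that particular minor is equivalent to $\DF(f_{j_1}),\dots,\DF(f_{j_n})$ generating $\DF(I_\X)$, which is what ``strict regular sequence generating $I_\X$'' entails by Proposition~\ref{prop:strictCI}(b).

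For the main direction (a)$\Rightarrow$(b), I would run the argument of Proposition~\ref{prop:whichgens} verbatim, but with $P$ replaced by its homogeneous counterpart: the ring $P/\DF(I_\X)$ is \hbox{0-dimensional}, local, and graded, with socle a \hbox{1-dimensional} space concentrated in top degree. Writing $\DF(f_j) = \sum_{i=1}^n a_{ij} x_i$ and letting $(h_1,\dots,h_n)$ be a homogeneous regular sequence generating $\DF(I_\X)$ (which exists by Proposition~\ref{prop:strictCI}(a)), I would express $\DF(f_{j_k}) = \sum_\ell \frac{c_{\ell k}}{a_k} h_\ell$; here, because everything is graded and $P$ is a polynomial ring, the change-of-basis entries can be taken homogeneous, and the determinant $\det(c_{\ell k}/a_k)$ being a non-zerodivisor in $P/\DF(I_\X)$ forces the matrix $(c_{\ell k}/a_k)$ to be invertible over $P_{\langle x_1,\dots,x_n\rangle}$, in fact (after clearing the $a_k$, which are units at the origin) over $P$ up to a unit scalar in the relevant degree. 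Hence $\langle \DF(f_{j_1}),\dots,\DF(f_{j_n})\rangle = \DF(I_\X)$ as homogeneous ideals. By the definition of a strict regular sequence and by Proposition~\ref{prop:strictCI}(b), $(f_{j_1},\dots,f_{j_n})$ is then a regular sequence generating $I_\X$, completing (a)$\Rightarrow$(b).

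The subtle point — and the one I would spell out carefully rather than wave through — is that Proposition~\ref{prop:whichgens} only yields generation of $\Q P_\M$ in the local case, whereas here I am claiming generation of the full ideal $I_\X$. The gain comes entirely from homogeneity: a homogeneous ideal $J \subseteq \DF(I_\X)$ with $J_{\langle x_1,\dots,x_n\rangle} = \DF(I_\X)_{\langle x_1,\dots,x_n\rangle}$ must already equal $\DF(I_\X)$, because $\langle x_1,\dots,x_n\rangle$ is the unique relevant homogeneous prime and a graded module that vanishes after localizing there vanishes (its generators, being homogeneous, would each be annihilated by some homogeneous element outside $\langle x_1,\dots,x_n\rangle$, i.e.\ by a scalar). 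So the ``$\Rad = \M$'' hypothesis needed in Proposition~\ref{prop:whichgens} is automatic in the graded setting and need not be assumed. I expect the main obstacle to be purely expository: making precise that the matrix entries in the change of basis may be chosen homogeneous of the appropriate degrees, so that ``invertible over $P_\M$'' really does descend to ``$\langle \DF(f_{j_1}),\dots,\DF(f_{j_n})\rangle = \DF(I_\X)$'' on the nose; once that is granted, the rest is a transcription of earlier arguments together with Proposition~\ref{prop:strictCI}(b) to pass from $\DF$-level generation back to $I_\X$.
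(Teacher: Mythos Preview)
Your proposal is correct and follows essentially the same route as the paper: rerun the proof of Proposition~\ref{prop:whichgens} in the graded ring $P/\DF(I_\X)$ and use homogeneity to upgrade ``invertible over $P_\M$'' to ``invertible over~$P$''. The paper's version sharpens the key step more directly than you do: in the homogeneous setting no denominators $a_k$ are needed, the determinant $\det(c_{\ell k})$ is a \emph{homogeneous} polynomial which (by the argument of Proposition~\ref{prop:whichgens}) lies outside $\M = \langle x_1,\dots,x_n\rangle$, and therefore must be a non-zero constant --- so the change-of-basis matrix is invertible over~$P$ on the nose, giving $\langle \DF(f_{j_1}),\dots,\DF(f_{j_n})\rangle = \DF(I_\X)$ immediately. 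Your third-paragraph argument (a homogeneous inclusion $J \subseteq \DF(I_\X)$ that becomes an equality after localizing at $\langle x_1,\dots,x_n\rangle$ is already an equality) is a valid alternative bridge, but the paper bypasses it by obtaining invertibility over~$P$ directly.
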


\begin{proof}
The proof is analogous to the proof of Proposition~\ref{prop:whichgens}. 
The main difference is that in the proof of (a)$\Rightarrow$(b)
of that proposition we show that $\det(c^{\mathstrut}_{\ell k})$
is not contained in $\M P_\M$.
On the other hand, in the current setting the determinant 
$\det(c^{\mathstrut}_{\ell k})$ is a homogeneous polynomial, 
and~$\M$ is the ideal generated by the indeterminates.
Consequently, $\det(c^{\mathstrut}_{\ell k})$ is a non-zero constant,
and hence invertible in~$P$.
\end{proof}

The next examples show Algorithm~\ref{alg:CheckSCI} 
and Corollary~\ref{cor:whichhomoggens} at work.

\begin{example}\label{ex:8pointscubic}
Let $K=\QQ$, let $P=K[x,y,z]$, and let~$\X$ be the reduced
subscheme of~$\AA^3_K$ consisting of the
eight points $p_1 = (0,0,0)$, $p_2 =(1,1,1)$, $p_3 = (-1,1,-1)$,
$p_4 = (2,4,8)$, $p_5 = (-2,4,-8)$, $p_6 = (3,9,27)$, $p_7 = (-3,9,-27)$,
and $p_8 = (4,16,64)$ on the twisted cubic curve
$T=\{(t, t^2, t^3)\in\AA^3_K \mid t\in K\}$.
The reduced $\tt DegRevLex$-Gr\"obner basis
of the vanishing ideal~$I_\X$ is
$$
\begin{array}{l}
\{y^2 -xz,\  xy -z,\  x^2 -y,\;
\smallskip\\
yz^2 -\tfrac{2}{15} z^3 +49xz +\tfrac{98}{5}yz -14z^2
+\tfrac{336}{5}x -36y -\tfrac{260}{3} z,
\smallskip\\
xz^2 -\tfrac{1}{30} z^3 -\tfrac{91}{10} yz
-\tfrac{96}{5} x +\tfrac{82}{3} z,
\smallskip\\
z^4 -\tfrac{418}{5} z^3 +6699xz +\tfrac{61446}{5} yz
-\! 1408z^2 +\tfrac{210672}{5} x -\! 5292y -\! 54340z\}
\end{array}
$$
and hence
$\DF(I_\X) = \langle y^2 -xz,\  xy,\  x^2, \ yz^2 -\tfrac{2}{15} z^3, \
xz^2 -\tfrac{1}{30} z^3, \  z^4\rangle$.

In Step~(2) we calculate the Castelnuovo function $(1,3,3,1)$
and notice that it is symmetric.

Next we write the generators of~$\DF(I_\X)$ in the form
$y^2-xz = (-z)x + (y)y + (0)z$, etc., and get the matrix
$$
W= \left(\begin{array}{rcccccc}
-z \quad & y \quad  &  x\quad   & 0 & 0 &0 \\
y \quad &  0 \quad &0\quad  & 0 & 0 & 0\\
0 \quad &  0\quad  & 0\quad  & yz-\tfrac{2}{15}z^2\quad 
& xz - \tfrac{1}{30} z^2\quad & z^3
\end{array} \right)
$$
Since the residue classes of all maximal minors of~$W$ are zero in
$P/\DF(I_\X)$, we conclude that $\X$ is not a strict complete 
intersection scheme. Notice, however, that we can use~\cite{KLR},
Algorithm~4.6 and Theorem~6.8 to check that~$\X$
is a strict Gorenstein scheme.
\end{example}

A positive answer is obtained in the following case.

\begin{example}\label{ex:CI}
Let $K=\QQ$, let $P=K[x,y]$, 
let $f_1 = x^3 -x -2y^5 +4y^4 -2y^3 +4y^2 -1$, $f_2 = 
xy -y^5 +2y^4 -y^3 +2y^2$, and $f_3 = y^7 -4y^6 +5y^5 -4y^4 +4y^3 -y$,
and let $\X$ be the 0-dimensional
subscheme of~$\AA^2_K$ defined by
$I_\X = \langle f_1,\ f_2, \ f_3\rangle$.
Then the reduced ${\tt DegRevLex}$-Gr\"obner basis of~$I_\X$
turns out to be
$$
G= (y^5 -2y^4 +y^3 -xy -2y^2,\;  x^3 -2xy -x -1,\;  xy^3 -2xy^2 -y,\;
x^2y -y^3 -y)
$$
and hence we have
$\DF(I_\X) = \langle y^5,\  x^3,\  xy^3, \ x^2y-y^3\rangle$.

In Step~(2) we get the Castelnuovo function
$(1,\,2,\, 3,\, 2,\, 1)$ which is symmetric.
Next we write $y^5 = (0)x+(y^4)y$, etc., and get the matrix
$$
W = \left(\begin{array}{lllc}
0 \ & x^2 \ & y^3\ &  0\\
y^4 \ & 0 \ & 0 \ & x^2-y^2
\end{array} \right)
$$
Here the tuple of residue classes in~$P/\DF(I_\X)$
of the minors of order~2 of~$W$ is given by
$(0,\; 0,\; 0,\; 0,\; -\bar{y}^4,\; 0)$,
and therefore~$\X$ is a strict complete intersection scheme.

Notice that the non-zero minor of order~2 is obtained by selecting
the second and the fourth column of~$W$.
The corresponding polynomials satisfy 
$\DF(g_2)=x^3$ and $\DF(g_4)=x^2y-y^3$. They
form a regular sequence which generates $\DF(I_\X)$. 
In particular, note that we have $y^5 = (xy)x^3 +(-x^2 -y^2)(x^2y -y^3)$
and $xy^3 = (y)x^3 + (-x)(x^2y -y^3)$.

Hence 
the polynomials $g_2=x^3 -2xy -x -1$ and $g_4=x^2y -y^3 -y$
form a strict regular sequence. It turns out that 
$$
\begin{array}{cl}
f_1 &\!\!= (-2xy +1)\, g_2 + (2x^2 +2y^2 -4y)\, g_4\\
f_2 &\!\!= (-xy)\,g_2 + (x^2 +y^2 -2y)\,g_4\\
f_3 &\!\!= (xy^3 -2xy^2 +y)\,g_2 + (-x^2y^2 -y^4 +2x^2y +4y^3 -4y^2 -x)\,g_4
\end{array}
$$
\end{example}

\bigbreak
%%%%%%%%%%%%%%%%%%%%%%%%%%%%%%%%%%%%%%%%%%%%%%%%%%%%%%%%%%%%
%
% Section 5: Strict Complete Intersections via Border Bases
%
%%%%%%%%%%%%%%%%%%%%%%%%%%%%%%%%%%%%%%%%%%%%%%%%%%%%%%%%%%%%

\section{Strict Complete Intersections via Border Bases}
\label{Strict Complete Intersections via Border Bases}

In this section we continue to work in the setting
defined at the beginning of Section~\ref{Zero-Dimensional Affine Schemes}.
Our goal is to construct another algorithm for checking 
the strict complete intersection property which is based
on the computation of a degree filtered border basis of~$I_\X$.
This algorithm has the advantage of allowing us to check
which ideals in certain families of ideals are strict
complete intersections (see Example~\ref{ex:SCI22} and~\cite{KLR3}).
First of all, we define the following useful kind of vector 
space bases of~$R_\X$.

\begin{definition}\label{def:degfilt}
A tuple $B=(b_1,\dots, b_\mu) \in R_\X^\mu$ is said to be
a {\bf degree filtered $K$-basis} of~$R_\X$ if
the set $F_iB = B\cap F_i R_\X$ is a $K$-basis of~$F_i R_\X$
for every $i\in\ZZ$ and if $\ord_\F(b_1) \le \cdots \le \ord_\F(b_\mu)$.
\end{definition}

In addition to these properties, we may (and shall) assume that
$b_1=1$ in each degree filtered $K$-basis of~$R_\X$.
For a discussion of this notion, we refer to~\cite{KLR}, 
Remark~2.10 and Example~2.11.
Degree filtered bases of~$R_\X$ can  be characterized as follows.

\begin{proposition}\label{prop:CharDFB}
Let~$\X$ be a 0-dimensional subscheme of~$\AA^n_K$ as above, and
let $B=(b_1,\dots,b_\mu)\in R_\X^\mu$ with $\ord_\F(b_1) \le \cdots
\le \ord_\F(b_\mu)$.
Then the following conditions are equivalent.
\begin{enumerate}
\item[(a)] The tuple~$B$ is a degree filtered $K$-basis of~$R_\X$.

\item[(b)] The leading form tuple $\LF(b_1),\dots,\LF(b_\mu)$
is a $K$-basis of the graded ring $\grF(R_\X)\cong P/\DF(I_\X)$.

\item[(c)] There exist polynomials $f_1,\dots,f_\mu\in P$
such that $b_i=f_i+I_\X$ for $i=1,\dots,\mu$ and
$B\hom = (f_1\hom + I_\X\hom,\dots, f_\mu\hom + I_\X\hom)$
is a $K[x_0]$-module basis of~$R_\X\hom$.
\end{enumerate}
\end{proposition}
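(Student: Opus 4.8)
The plan is to prove the cycle of implications (a)$\Rightarrow$(b)$\Rightarrow$(c)$\Rightarrow$(a), using the flat family described in Remark~\ref{ReesAlg} as the main bridge between the affine ring $R_\X$, its graded ring $\grF(R_\X)$, and the homogeneous coordinate ring $R_\X\hom$.

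For (a)$\Rightarrow$(b), I would argue that if $B$ is a degree filtered $K$-basis, then for each $i$ the set $F_iB = B \cap F_iR_\X$ is a $K$-basis of $F_iR_\X$, so $F_iB \setminus F_{i-1}B$ maps to a $K$-basis of the quotient $F_iR_\X/F_{i-1}R_\X$. Since $\ord_\F(b_1)\le\cdots\le\ord_\F(b_\mu)$, the elements $b_j$ with $\ord_\F(b_j)=i$ are exactly those in $F_iB\setminus F_{i-1}B$, and their leading forms $\LF(b_j)$ span the degree-$i$ component of $\grF(R_\X)=\bigoplus_i F_iR_\X/F_{i-1}R_\X$; linear independence within each degree follows from the basis property, and independence across degrees is automatic because the pieces live in different graded components. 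Hence $\LF(b_1),\dots,\LF(b_\mu)$ is a $K$-basis of $\grF(R_\X)$.

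For (b)$\Rightarrow$(c), I would lift each $b_i$ to a polynomial $f_i\in P$ with $\deg(f_i)=\ord_\F(b_i)$ (possible by Remark~\ref{rem:ReprGr}, e.g.\ via a degree-compatible normal form), so that $\DF(f_i)$ represents $\LF(b_i)$ under the identification $\grF(R_\X)\cong P/\DF(I_\X)$. The claim is then that the homogenizations give a $K[x_0]$-module basis of $R_\X\hom = \overline{P}/I_\X\hom$. Here I would use the flatness from Remark~\ref{ReesAlg}(c): $R_\X\hom$ is a free $K[x_0]$-module with $R_\X\hom/\langle x_0\rangle \cong \grF(R_\X)$, and the residue of $f_i\hom$ modulo $x_0$ is exactly $\DF(f_i)$, which is $\LF(b_i)$. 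Since the $\LF(b_i)$ form a $K$-basis of the special fiber, the graded Nakayama lemma (applied degreewise to the finitely generated graded $K[x_0]$-module $R_\X\hom$, noting that $x_0$ lies in the irrelevant maximal ideal and is a non-zerodivisor) shows that the $f_i\hom$ generate $R_\X\hom$ as a $K[x_0]$-module; freeness plus a rank count (the rank of $R_\X\hom$ over $K[x_0]$ equals $\dim_K\grF(R_\X)=\mu$) upgrades this to a basis. For (c)$\Rightarrow$(a), I would instead reduce modulo $x_0-1$: since $R_\X\hom/\langle x_0-1\rangle\cong R_\X$ and $f_i\hom \bmod \langle x_0-1\rangle = f_i$, a $K[x_0]$-module basis specializes to a $K$-basis $(b_1,\dots,b_\mu)$ of $R_\X$, and the degree-filtered compatibility ($F_iB$ a basis of $F_iR_\X$ for all $i$) is read off from the grading of $R_\X\hom$ together with $\deg(f_i\hom)=\ord_\F(b_i)$, using that multiplication by powers of $x_0$ and setting $x_0=1$ interchanges the grading of $R_\X\hom$ with the filtration on $R_\X$.

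The main obstacle I anticipate is making the specialization arguments in (b)$\Rightarrow$(c) and (c)$\Rightarrow$(a) rigorous without hand-waving: one must keep careful track of how the degree of $f_i\hom$ in $\overline{P}$ corresponds to the filtration degree $\ord_\F(b_i)$ in $R_\X$ and to the graded degree of $\LF(b_i)$ in $\grF(R_\X)$, and invoke the freeness/non-zerodivisor properties of Remark~\ref{ReesAlg}(c) in the right order so that Nakayama applies. Much of this can be streamlined by citing \cite{KR2}, Section~4.3.B for the flat family bookkeeping; the representation-theoretic content is then just the degreewise matching of bases across the three fibers.
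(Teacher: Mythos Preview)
Your proposal is correct and follows the same cycle (a)$\Rightarrow$(b)$\Rightarrow$(c)$\Rightarrow$(a) as the paper, using the flat family of Remark~\ref{ReesAlg} and the two specializations $x_0\mapsto 0$, $x_0\mapsto 1$ as the main tool. The only minor difference is in (c)$\Rightarrow$(a): the paper specializes modulo $x_0$ as well (thereby recovering a basis of $\grF(R_\X)$) and then counts via the Hilbert function formula $\HFa_\X(i)=\sum_{j\le i}\HF_{\grF(R_\X)}(j)$, whereas you read off the filtration directly from the grading of $R_\X\hom$; both arguments are short and interchangeable.
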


\begin{proof}
To show that~(a) implies~(b), we note that 
$\dim_K(\grF(R_\X))=\dim_K(R_\X)$
implies that it suffices to prove linear independence.
Assume that there exists a tuple
$(a_1,\dots,a_\mu)\in K^\mu \setminus \{0\}$ such that we have
$a_1 \LF(b_1) + \cdots + a_\mu \LF(b_\mu){=0}$. Since the elements
$\LF(b_i)$ are homogeneous in the graded ring $\grF(R_\X)$, we may assume
that there is a degree~$d$ such that $a_i=0$ for $\ord_\F(b_i)\ne d$.
Let $k, k+1,\dots,\ell\in\{1,\dots,\mu\}$ be the indices for which
$\ord_\F(b_i)=d$. Then $a_k \LF(b_k) + \cdots + a_\ell \LF(b_\ell)=0$
implies $a_k b_k + \cdots +a_\ell b_\ell \in F_{d-1}R_\X = \langle b_1,
\dots,b_{k-1}\rangle_K$, in contradiction
to the linear independence of~$F_d B = (b_1,\dots,b_\ell)$.

To prove that~(b) implies~(c), we choose $f_i\in P$ such that
$b_i = f_i+ I_\X$ and $\deg(f_i)= \ord_\F(b_i)$ for $i=1,\dots,\mu$.
(If $f_i$ does not satisfy the second condition, replace it by
$\NF_{\sigma,I_\X}(f_i)$ for some degree compatible term ordering~$\sigma$.)
Then we have $\LF(b_i)=\DF(f_i) + \DF(I_\X)$ for $i=1,\dots,\mu$.
Now the claim is a consequence of Remark~\ref{ReesAlg}.c
and $\DF(f_i) = f_i\hom(0,x_1,\dots,x_n)$ for $i=1,\dots,\mu$.

Finally we show that~(c) implies~(a). In view of the isomorphisms in
Remark~\ref{ReesAlg}.c, the elements $b_i= f_i+I_\X =
f_i\hom(1,x_1,\dots,x_n) + I_\X$ with $i\in\{1,\dots,\mu\}$
form a $K$-basis of~$R_\X$, and the elements
$$
\LF(b_i)= \DF(f_i)+ \DF(I_\X) = f_i\hom(0,x_1,\dots,x_n) + \DF(I_\X)
$$
with $i\in\{1,\dots,\mu\}$ form a $K$-basis of $\grF(R_\X)$.
From the latter property and the formula $\HFa_\X(i) =
\sum_{j=0}^i \HF_{\grF(R_\X)}(j)$
we deduce that the number of elements
in $F_i B$ equals $\HFa_\X(i)$ for all $i\ge 0$. Hence
$F_i B$ is a $K$-basis of $F_i R_\X$ for all $i\ge 0$, and
the proof is complete.
\end{proof}

An even more useful type of degree-filtered
$K$-bases is described in the following proposition.
We recall that $\mathbb{T}^n = 
\{ x_1^{\alpha_1}\cdots x_n^{\alpha_n} \mid \alpha_i \ge 0\}$
is the monoid of terms in~$P$.

\begin{proposition}\label{prop:CharDFBB}
Let $\OO=\{t_1,\dots,t_\mu\}$ be an order ideal of terms
in~$\mathbb{T}^n$ such that $\deg(t_1)\le \cdots
\le\deg(t_\mu)$. Let $\partial\OO=\{b_1,\dots, b_\nu\}$
be the border of~$\OO$, and let $G=\{g_1,\dots,g_\nu\}$
be an $\OO$-border prebasis of the ideal~$I_\X$, where we have
$g_j= b_j - \sum_{i=1}^\mu \gamma_{ij} t_i$
with $\gamma_{1j},\dots,\gamma_{\mu j}\in K$ for $j=1,\dots,\nu$.
Then the following conditions are equivalent.
\begin{enumerate}
\item[(a)] The terms in~$\OO$ form a degree filtered $K$-basis
of~$R_\X$.

\item[(b)] The terms in~$\OO$ form a $K$-basis of~$\grF(R_\X)
\cong P/\DF(I_\X)$.

\item[(c)] The ideal~$I_\X$ has an $\OO$-border basis and
for all $i\ge 0$ we have
$\HFa_{\X}(i)= \#\{j\in\{1,\dots,\mu\} \mid \deg(t_j)=i\}$.

\item[(d)] The ideal~$I_\X$ has an $\OO$-border basis and we have
$b_j\in \DF(g_j)$ for $j=1,\dots,\nu$.

\item[(e)] The polynomials in $\DF(G)=\{\DF(g_1),\dots,
\DF(g_\nu)\}$ form an $\OO$-border basis of~$\DF(I_\X)$,
hence they generate $\DF(I_\X)$.
\end{enumerate}
If these conditions are satisfied, we say that~$G$ is a
{\bf degree filtered $\OO$-border basis} of~$I_\X$.
\end{proposition}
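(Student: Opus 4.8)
The plan is to establish the cycle of implications (a) $\Leftrightarrow$ (b), (a) $\Rightarrow$ (c), (c) $\Rightarrow$ (a), (a) $\Rightarrow$ (d), (d) $\Rightarrow$ (e), and (e) $\Rightarrow$ (b), so that all six conditions become equivalent. The equivalence (a) $\Leftrightarrow$ (b) is essentially the content of Proposition \ref{prop:CharDFB} specialized to the case where $B = \OO$ consists of terms; here one uses that $\LF(t_i)$ is represented by $t_i$ itself in $\grF(R_\X) \cong P/\DF(I_\X)$, so the leading form tuple is literally $(\bar t_1, \dots, \bar t_\mu)$. For (a) $\Rightarrow$ (c): if $\OO$ is a degree filtered $K$-basis, then in particular it is a $K$-basis of $R_\X$, which is exactly the statement that $I_\X$ has an $\OO$-border basis (the $\OO$-border prebasis $G$ becomes a border basis precisely when $\OO$ represents a $K$-basis of the quotient); the Hilbert function count follows because $F_iB = \OO \cap F_i R_\X = \{t_j \mid \deg(t_j) \le i\}$ has cardinality $\HFa_\X(i)$, and taking first differences gives the stated formula. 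For (c) $\Rightarrow$ (a): given that $\OO$ is a $K$-basis of $R_\X$ and the degree counts match $\HFa_\X$, one checks that $F_iB$ has the right cardinality $\HFa_\X(i)$ and that these $\HFa_\X(i)$ elements of $F_iR_\X$ are linearly independent (being a subset of the basis $\OO$), hence form a $K$-basis of $F_iR_\X$; monotonicity of orders is built into the hypothesis $\deg(t_1) \le \cdots \le \deg(t_\mu)$ together with the fact that for terms $\ord_\F(\bar t_j)$ need not equal $\deg(t_j)$ in general but does once $\OO$ is a basis — this subtlety needs a short argument.

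The implications involving (d) and (e) are where the border-basis bookkeeping concentrates. For (a) $\Rightarrow$ (d): assuming $\OO$ is a degree filtered $K$-basis, we already have the $\OO$-border basis from (a) $\Rightarrow$ (c); the new content is $b_j \in \DF(g_j)$, i.e.\ that the border term $b_j$ (rather than some term of lower degree appearing in $\sum_i \gamma_{ij} t_i$) is the dominant term of $g_j = b_j - \sum_i \gamma_{ij} t_i$. This should follow from the Hilbert-function formula in (c): if some $\gamma_{ij} \neq 0$ with $\deg(t_i) > \deg(b_j)$, then $t_i \in \LF$-span of lower-order elements would force $\HFa_\X$ to drop below the term count at degree $\deg(t_i)$, contradicting the exact equality. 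For (d) $\Rightarrow$ (e): when $b_j \in \DF(g_j)$, the degree form $\DF(g_j) = b_j - \sum_{\deg(t_i)=\deg(b_j)} \gamma_{ij} t_i$ is again of the shape "border term minus $K$-combination of order-ideal terms of the same degree," so the set $\DF(G)$ is an $\OO$-border prebasis of $\DF(I_\X)$; it is an actual border basis because $\OO$ is a $K$-basis of $\grF(R_\X) = P/\DF(I_\X)$ — but to get \emph{that} we only know $\OO$ is a $K$-basis of $R_\X$, so we must transfer basis-ness across the flat degeneration, which is where Remark \ref{ReesAlg}.c (the flat family with special fiber $\grF(R_\X)$) does the work, exactly as in the proof of Proposition \ref{prop:CharDFB}. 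Finally (e) $\Rightarrow$ (b) is immediate: an $\OO$-border basis of $\DF(I_\X)$ witnesses that $\OO$ is a $K$-basis of $P/\DF(I_\X) \cong \grF(R_\X)$.

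The main obstacle I anticipate is the interaction between the \emph{a priori} unknown orders $\ord_\F(\bar t_j)$ of the residue classes of the order-ideal terms and their \emph{degrees} $\deg(t_j)$ in $P$: these coincide once we know $\OO$ is a degree filtered basis, but several of the "easy" directions secretly need this coincidence, and the cleanest way to handle it is to route everything through the homogenization $R_\X\hom$ and the flat family of Remark \ref{ReesAlg}, reducing the border-basis statements over $R_\X$ and over $\grF(R_\X)$ to the single statement that $B\hom$ is a $K[x_0]$-module basis of $R_\X\hom$ — precisely condition (c) of Proposition \ref{prop:CharDFB}. I would therefore organize the proof so that (a), (b) and the two "$\OO$ has a border basis" halves of (c), (d), (e) are all seen as shadows of that one module-basis condition, leaving only the short combinatorial arguments about which terms survive in $\DF(g_j)$ to be done by hand.
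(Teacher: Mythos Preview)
Your plan is sound and, once the sketchy step is tightened, yields a correct proof. The paper's own proof is much terser and largely outsources the work: it takes (a)$\Leftrightarrow$(b) from Proposition~\ref{prop:CharDFB}, declares (a)$\Leftrightarrow$(c) immediate from the definitions, and then cites \cite{KR4}, Thm.~2.4 for both (b)$\Leftrightarrow$(d) and (b)$\Rightarrow$(e), with (e)$\Rightarrow$(b) being tautological. Your approach is more self-contained: you unpack what that external theorem is doing by arguing directly with border prebases, degree forms, and the Rees-algebra flat family. That is a genuine expository gain.

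The one place where your outline is loose is the step (d)$\Rightarrow$(e). You write that ``we must transfer basis-ness across the flat degeneration,'' but knowing only that $\OO$ is a $K$-basis of $R_\X$ does \emph{not} by itself force $\OO$ to be a $K$-basis of $\grF(R_\X)$; flatness gives you equality of dimensions, not transport of a specific basis. What actually closes the argument is the observation you have already set up but do not invoke explicitly: since $b_j\in\DF(g_j)$, the set $\DF(G)$ is an $\OO$-border \emph{prebasis} contained in $\DF(I_\X)$, so the Border Division Algorithm shows that $\OO$ spans $P/\langle\DF(G)\rangle$ and hence spans $P/\DF(I_\X)$; combined with $\dim_K P/\DF(I_\X)=\dim_K R_\X=\mu$ (this is where the flat family, or simply the computation $\dim_K\grF(R_\X)=\dim_K R_\X$, enters), you get that $\OO$ is a basis and that $\langle\DF(G)\rangle=\DF(I_\X)$. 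Alternatively, and perhaps more in the spirit of your final paragraph, the degree condition in (d) makes the border-basis normal form on~$R_\X$ degree non-increasing, which gives (d)$\Rightarrow$(a) directly; then (a)$\Rightarrow$(b)$\Rightarrow$(e) via Proposition~\ref{prop:CharDFB}. Either fix is short, and your identification of the $\ord_\F$ versus $\deg$ subtlety in (c)$\Rightarrow$(a) is exactly the right thing to flag.
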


\begin{proof}
The equivalence of~(a) and~(b) was shown in Proposition~\ref{prop:CharDFB}.
The equivalence of~(a) and~(c) follows immediately from the definitions.
The equivalence of~(b) and~(d) follows from~\cite{KR4}, Thm.~2.4.
The implication (b)$\Rightarrow$(e) was also shown 
in~\cite{KR4}, Thm.~2.4, and (e)$\Rightarrow$(b) holds by definition.
\end{proof}

Using a degree filtered border basis as described in the above proposition, 
we can formulate
another version of Algorithm~\ref{alg:CheckSCI} for checking the strict
complete intersection property.

\begin{algorithm}{\bf (Checking Strict Complete Intersections,
II)}\label{alg:CheckSCI2}\\
Let $\X$ be a 0-dimensional scheme in~$\AA^n_K$, let
$R_\X = P/I_\X$ be the affine coordinate ring of~$\X$, and let
$\mu=\dim_K(R_\X)$. The following instructions define an algorithm 
which checks whether~$\X$ is a strict complete intersection and
returns the corresponding Boolean value.
\begin{enumerate}
\item[(1)] Compute an order ideal $\OO=(t_1,\dots,t_\mu)$
in~$\mathbb{T}^n$ such that~$I_\X$ has a degree filtered
$\OO$-border basis.  Let $\rho=\deg(t_\mu)$.

\item[(2)] For $i=0,\dots,\rho$, find
$h_i = \# \{j\in\{1,\dots,\mu\} \mid \deg(t_j)=i\}$.

\item[(3)] For $i=0,\dots, \lfloor \rho/2 \rfloor$, check
whether $h_{\rho-i}=h_i$. If this is not the case, return
{\tt FALSE} and stop.

\item[(4)] Let $G=\{g_1,\dots,g_\nu\}$ be the $\OO$-border basis of~$I_\X$.
For $j=1,\dots,\nu$, write $\DF(g_j)= \sum_{i=1}^n h_{ij} x_i$
with homogeneous polynomials $h_{ij}\in P$.

\item[(5)] Form the matrix $W$ of size $n\times \nu$
whose columns are given by $\sum_{i=1}^n h_{ij} e_i$ 
for $j=1,\dots,\nu$.

\item[(6)] Calculate the tuple of residue classes in
$P/\DF(I_\X)$ of the minors of order~$n$ of~$W$.

\item[(7)] If the result is different from $(0,\dots,0)$,
return {\tt TRUE}. Otherwise return {\tt FALSE}.
\end{enumerate}
\end{algorithm}

\begin{proof}
By Step~(1), the residue classes of the elements of~$\OO$
form a $K$-basis of the ring $P/\DF(I_\X)$.
Since every strict complete intersection is a strict Gorenstein
ring, we can stop in Step~(3) if the Castelnuovo function
$(h_0,\dots,h_\rho)$ of this ring is not symmetric.

Now we use Algorithm~\ref{alg:CheckSCI} for the ring $P/\DF(I_\X)$.
Notice that the degree forms $\DF(g_1),\dots,\DF(g_\nu)$ 
generate $\DF(I_\X)$ by Proposition~\ref{prop:CharDFBB}.e.
In Step~(4) we write them as linear combinations
of~$x_1,\dots,x_n$. Hence the correctness follows from
Algorithm~\ref{alg:CheckSCI}.
\end{proof}

Next example shows how the methods described in this paper 
allow us to detect strict complete intersections within certain 
families of $0$-dimensional schemes.
For a more detailed discussion of this topic see~\cite{KLR3}.

\begin{example}\label{ex:SCI22}
Let $K$ be a field, let $P=K[x,y]$, and let~$\OO$ be the order ideal 
$\OO = \{1, y, x, xy\}$.
According to~\cite{KR4}, Example 3.8, the 
ideal $I = \langle f_1, f_2, f_3, f_4\rangle$
where
\begin{eqnarray*}
f_1 &=& y^2 - (-c_{23}c_{41}c_{42} + c_{21}c_{42}c_{43} - c_{21}c_{44}+  c_{23})
\\ && - c_{21}x - (-c_{21}c_{42} - c_{41}c_{44} + c_{43})y
- c_{41}xy,\\
f_2 &=& x^2 - (-c_{34}c_{41}c_{42} + c_{32}c_{41}c_{44} - c_{32}c_{43}+ c_{34})
\\ && - (-c_{32}c_{41} -  c_{42}c_{43} +  c_{44})x - c_{32}y
- c_{42}xy,\\
f_3 &=& xy^2 - (c_{23}c_{32}c_{41} -  c_{21}c_{32}c_{43} + c_{21}c_{34})
\\ && - c_{23}x - (c_{21}c_{32} + c_{34}c_{41})y - c_{43}xy,\\
f_4 &=& x^2y -(c_{21}c_{34}c_{42} - c_{21}c_{32}c_{44} + c_{23}c_{32})
\\ && - (c_{21}c_{32}  + c_{23}c_{42})x - c_{34}y - c_{44}xy,\\
\end{eqnarray*}
is the defining ideal of the universal family of all subschemes of
length four of the affine plane which have the property that their 
coordinate ring admits $\overline{\mathcal{O}}$
as a vector space basis. The parameters $c_{21}$, $c_{23}$, $c_{32}$, 
$c_{34}$, $c_{41}$, $c_{42}$, $c_{43}$, and~$c_{44}$ are free. 
In other words, the family is parametrized by an $8$-dimensional affine space.
Since the degree form ideal $\DF(I)$ is generated by the degree forms 
of $f_1, f_2, f_3, f_4$, 
we have $\DF(I) =  \langle y^2 - c_{41}xy, \ x^2 - c_{42}xy, \ xy^2,\ x^2y\rangle$.
To compute the locus of strict complete intersections 
in~$\mathbb{A}^8_K$, we write the generators of~$\DF(I)$ in the form
$$
(- c_{41}y)x +(y)y,\quad  (x)x +(- c_{42}x)y,\quad (y^2)x +(0)y, 
\quad (xy)x + (0)y
$$
We get the matrix 
$$
W = \left(\begin{array}{cccc}
-c_{41}y \quad &             x\quad &     y^2\quad &  xy\\
y\  \quad      &   c_{42}x \quad &      0 \quad &    0
\end{array} \right)
$$
Then the only non-zero maximal minor of $W$ modulo $\DF(I)$ is given by
$(1-c_{41}c_{42})xy$. In conclusion, outside the hypersurface 
in~$\AA^8_K$ defined by $1-c_{41}c_{42} = 0$,
all ideals define a strict complete intersection scheme.
\end{example}

Finally, we note that one can also use the K\"ahler different
of~$R_\X$ to check whether this ring is a strict complete intersection
or locally a complete intersection. However, this approach introduces
constraints on the characteristic of the base field.
Let us formulate the characterizations underlying the
algorithms using K\"ahler differents and leave the details
to the interested reader.

\begin{remark}\label{rem:Jac}
Let $\X$ be a 0-dimensional subscheme of~$\AA^n_K$, and let 
$R_\X=P/I_\X$ be the affine coordinate ring of~$\X$.
\begin{enumerate}
\item[(a)] The module of K\"ahler differentials
$\Omega^1_{R_\X/K}$ is given by the presentation
$$
\Omega^1_{R_\X/K} \cong {\textstyle\bigoplus\nolimits_{i=1}^n}
P dx_i / \left( I_\X \cdot {\textstyle\bigoplus\nolimits_{i=1}^n}
P dx_i + \langle {\textstyle\sum\nolimits_{i=1}^n}
\partial f/\partial x_i \; dx_i \mid f\in I_\X \rangle \right)
$$

\item[(b)] The K\"ahler different $\theta_{R_\X}$ of the $K$-algebra $R_\X$
is the ideal in~$R_\X$
generated by residue classes of the maximal minors
of the {\bf Jacobian matrix}
$\Jac(f_1,\dots,f_r) = (\partial f_i / \partial x_j)_{i,j}$,
where $\{f_1,\dots,f_r\}$ is a system of generators of~$I_\X$.
For further details about K\"ahler differents, see~\cite{Ku2}, \S~10.

\item[(c)] The K\"ahler different $\theta_{\grF(R_\X)}$ of the associated 
graded ring $\grF(R_\X) \cong P/\DF(I_\X)$ is defined similarly.

\item[(d)] Suppose that $\charac(K)$ does not divide~$\mu$.
Then $\X$ is a strict complete intersection if and only if
$\theta_{\grF(R_\X)}$ is non-zero.

\item[(e)] Again, suppose that $\charac(K)$ does not divide~$\mu$.
Then~$\X$ is locally a complete intersection if and only if
the image of $\theta_{R_\X}$ in every local ring of~$R_\X$
is non-zero. (For instance, if we know the principal idempotents
$f_1,\dots,f_s$ of~$R_\X$, it suffices to check that
$f_i\cdot \theta_{R_\X}\ne \langle 0\rangle$ for $i=1,\dots,s$.)

\end{enumerate}
\end{remark}

The following easy example shows that the assumption on the
characteristic of~$K$ is necessary for the approach
via K\"ahler differents to work, while the approach based
on Wiebe's result (see Proposition~\ref{prop:Wiebe})
works in general.

\begin{example}\label{ex:charCI}
Let $p$ be a prime number,  let $K=\mathbb F_p$, let $P=K[x]$, and let~$\X$
be the 0-dimensional subscheme of~$\AA^1_K$ defined by $I_\X = \langle x^p \rangle$.

When we use Algorithm~\ref{alg:CheckSCI} to check whether~$\X$ is a strict
complete intersection, we find the matrix $W=(x^{p-1})$ whose
determinant yields the relation $\bar{x}^{p-1}\in \grF(R_\X)\setminus \{0\}$.
Hence we conclude that~$\X$ is a strict complete intersection.

However, the Jacobian matrix is $\Jac(x^p)=(0)$ and therefore we have
$\theta_{\grF(R_\X)}=\langle 0\rangle$. Thus the K\"ahler different
does not yield the correct answer.
\end{example}

\medskip

\noindent{\bf Acknowledgements.}
The first and second authors were partially supported by
the Vietnam National Foundation for Science and 
Technology Development (NAFOSTED) grant number 101.04-2019.07.
The third author thanks the University of Passau 
for its hospitality and support during part of the preparation
of this paper.

\medskip

%%%%%%%%%%%%%%%%%%%%%%%%%%%%
\bibliographystyle{plain}

\begin{thebibliography}{99}
%%%%%%%%%%%%%%%%%%%%%%%%%%%%


\bibitem{ABPR} Abbott, J., Bigatti, A.M.,
Palezzato, E., Robbiano, L.: Computing and using minimal
polynomials, J.\ Symb.\ Comput., 2019 (to appear)

\bibitem{CoCoA} Abbott, J., Bigatti, A.M., 
Robbiano, L.: \cocoa : a system for doing Computations in 
Commutative Algebra, {\tt http://cocoa.dima.unige.it}.

\bibitem{As59} Assmus, E.F.: On the homology of local
rings, Illinois J.\ Math.\ {\bf 3} (1959), 187--199.

\bibitem{AH94} Avramov, L.L.,
Herzog, J.: Jacobian criteria for complete intersections. {T}he
graded case, Invent.\ Math.\ {\bf 117} (1994), 75--88.

\bibitem{BGS} Bermejo, I., Garcia-Marco, I.,
Salazar-Gonzalez, J.J.: An algorithm for checking whether
the toric ideal of an affine monomial curve is a complete
intersection, J.\ Symb.\ Comput.\ {\bf 42} (2007), 971--991.

\bibitem{BH} Bruns, W., Herzog, J.: 
Cohen-Macaulay Rings, Cambridge Univ.\ Press, Cambridge, 1993.

\bibitem{GL} Gulliksen, T.H., Levin, G.:
Homology of local rings, Queen's {P}apers in {P}ure and {A}ppl.\ 
{M}ath.\ {\bf 20}, Queen's University, Kingston, 1969.

\bibitem{KK} Kreuzer, M., Kunz, E.: 
Traces in strict Frobenius algebras and strict complete intersections, 
J.\ Reine Angew.\ Math.\ {\bf 381} (1987), 181-204.

\bibitem{KL} Kreuzer, M., Long, L.N:
Characterizations of zero-dimensional complete intersections,
Beitr. Algebra Geom.\ {\bf 58} (2017), 93--129.

\bibitem{KLR} Kreuzer, M., Long, L.N.,
Robbiano, L.: On the Cayley-Bacharach property, Comm.\ Algebra {\bf 47}
(2019), 328--354.

\bibitem{KLR3} Kreuzer, M., Long L.N., Robbiano L.:
Subschemes of the border basis scheme, in preparation.

\bibitem{KR1} Kreuzer, M., Robbiano, L.:
{\it Computational Commutative  Algebra 1}, 
Sprin\-ger-Verlag, Heidelberg, 2000.

\bibitem{KR2} Kreuzer, M., Robbiano, L.:
{\it Computational Commutative Algebra 2}, Sprin\-ger-Verlag, Heidelberg, 2005.

\bibitem{KR3} Kreuzer, M., Robbiano, L.:
{\it Computational  Linear and Commutative Algebra}, Sprin\-ger-Verlag, 
Heidelberg, 2016.

\bibitem{KR4} Kreuzer, M., Robbiano, L.:
Deformations of border bases, Collect.\ Math.\ {\bf 59} (2008), 275-297.

\bibitem{Ku2} Kunz, E.: {\it K\"{a}hler Differentials}, 
Adv. Lectures Math., Vieweg Verlag, Braunschweig, 1986.

\bibitem{Lo} Long, L.N.:
Various differents for 0-dimensional schemes and applications,
dissertation, University of Passau, Passau, 2015.

\bibitem{T57} Tate, J.: Homology of {N}oetherian rings
and local rings, Illinois J.\ Math.\ {\bf 1} (1957), 14--27.

\bibitem{V67} Vasconcelos, W.V.: Ideals generated
by {R}-sequences, J.\ Algebra {\bf 6} (1967), 309--316.

\bibitem{Wie} Wiebe, H.: \"Uber homologische Invarianten 
lokaler Ringe, Math. Ann. {\bf 179} (1969), 257-274.


\end{thebibliography}

\end{document}